\pgfplotsset{compat=1.12,}
\DeclarePairedDelimiter\ceil{\lceil}{\rceil}
\DeclarePairedDelimiter\floor{\lfloor}{\rfloor}
\let\x@caption\caption % original \caption
\long\def\x@@caption[#1]#2{\x@caption[{#1}]{#1 --- #2}}% with optional arg
\def\x@@@caption#1{\x@caption[{#1}]{#1}}% without optional arg
  \def\caption{\@ifnextchar[\x@@caption\x@@@caption}% new \caption
\newtheoremstyle{astyle}
    {12pt}       
    {}             
    {}            
    {}                       
    {\scshape \bf}             
    {.}                        
    {.5em}                    
    {}  
\theoremstyle{astyle}\newtheorem{definition}{Definition}
\theoremstyle{astyle}
\theoremstyle{astyle}\newtheorem{remark}{Remark}
\theoremstyle{astyle}\newtheorem{theorem}{Theorem}
\theoremstyle{astyle}
\theoremstyle{astyle}
\theoremstyle{astyle}\newtheorem{lemma}{Lemma}
\theoremstyle{astyle}\newtheorem{corollary}{Corollary}
\crefname{conj}{Conjecture}{conjectures}
\newcommand{\N}{\mathbb{N}}
\newcommand{\Z}{\mathbb{Z}}
\newcommand{\R}{\mathbb{R}}
\newcommand{\Q}{\mathbb{Q}}
\newcommand{\mycomment}[1]{}
\definecolor{codegreen}{rgb}{0,0.6,0}
\definecolor{codegray}{rgb}{0.5,0.5,0.5}
\definecolor{codepurple}{rgb}{0.58,0,0.82}
\definecolor{backcolour}{rgb}{0.95,0.95,0.92}
\lstdefinestyle{mystyle}{
    backgroundcolor=\color{backcolour},   
    commentstyle=\color{codegreen},
    keywordstyle=\color{magenta},
    numberstyle=\tiny\color{codegray},
    stringstyle=\color{codepurple},
    basicstyle=\ttfamily\footnotesize,
    breakatwhitespace=false,         
    breaklines=true,                 
    captionpos=b,                    
    keepspaces=true,                 
    numbers=left,                    
    numbersep=5pt,                  
    showspaces=false,                
    showstringspaces=false,
    showtabs=false,                  
    tabsize=2
}
\newcommand{\addresseshere}{%
  \enddoc@text\let\enddoc@text\relax
}
\title{Properties of $k$-Descending Trees}
\author{Agniv Sarkar \and Eric Severson}
\date{2021-2022}
\email{\href{mailto:agnivsarkar@proofschool.org}{agnivsarkar@proofschool.org} \and \href{mailto:eseverson@proofschool.org}{eseverson@proofschool.org}}
\address{973 Mission St, San Francisco, CA}
\begin{document}

%\begin{titlepage}
%\begin{center}
%\Large{\textbf{Properties of $k$-Descending Trees}}\\[1cm]
%\normalsize{\textit{Agniv Sarkar \and Eric Severson}}\\[5cm]
%\end{center}
%\end{titlepage}

\begin{abstract}
This was research presented at the Worldwide Federation of National Math Competitions in Bulgaria in 2022. 

For any real-valued $k > 1$, we consider the tree rooted at 0, where each positive integer $n$ has parent $\lfloor\frac{n}{k}\rfloor$. The average number of children per node is $k$, thus this definition gives a natural way to extend $k$-ary trees to irrational $k$. We focus on the sequence $r_d$: the count of nodes at depth $d$.

We first prove there exists some constant $\rho(k)$ such that $r_d \sim \rho(k)\cdot k^d$. We then study a family of values $k=\frac{a + \sqrt{a+4b}}{2}$, where we prove the sequence satisfies the exact recurrence $r_d = a\cdot r_{d-1} + b \cdot r_{d-2}$. This generalizes a special case when $k$ is the golden ratio and $r_d$ is the Fibonacci sequence.
\end{abstract}
\maketitle

\tableofcontents

\section{Introduction}
\label{sec:intro}

This contains a piece of original research centered within graph and number theory with a surprising connection to the Josephus Problem. The research was carried out by myself (Agniv Sarkar) and my mentor (Eric Severson) throughout the high school year of 2021-2022. 

This research was done to observe patterns seen in the $\phi$-tree generated with \cref{def:k-tree}. This became a very nice number theoretic problem, and when we began to look at the asymptotics of these trees, we found that there was a connection to the Josephus problem and calculating the solution to the problem. 

The trees themselves are most similar to a $k$-ary tree \cref{def:k-ary-tree}.

\begin{definition}
\label{def:k-ary-tree}
A $k$-ary tree is a rooted tree such that each node has no more than $k$ children. A complete $k$-ary tree is a rooted tree such that each node has exactly $k$ children. 
\end{definition}

This tree is commonly used as a data structure in computer science, such as through a Binary Search Tree, or a $2$-ary tree. However, the $k$ in the definition does not generalize nicely to non integer $k$, and that is where \cite{marsault2014rhythmic} defines a ``rhythmic tree," which is \cref{def:rhythm-tree}.

\begin{definition}
\label{def:rhythm}
Let $p, q \in \mathbb{Z}$ such that $p > q \geq 1.$ Then, 
\begin{itemize}
  \item Rhythm of directing parameter $(q, p)$ is a $q$-tuple of $r$ non-negative integers whose sum is $p$. 
  \[r = (r_0, r_1, \ldots, r_{q-1}) \text{, and, } \sum_{i=0}^{q-1}r_i = p.\]
  \item A rhythm $r$ is valid if it also satisfies 
  \[\forall k \in \{0, 1, \ldots, q-1\}, \sum_{i = 0}^j r_i > j + 1\]
  \item The growth rate of $r$ is the rational number $\frac{p}{q}$ or $\frac{p'}{q'}$ where $p'$ and $q'$ are quotients of $p$ and $q$ by their greatest common divisor, such that they are coprime.
\end{itemize}
\end{definition}

\begin{definition}
\label{def:rhythm-tree}
Let $r = (r_0, \ldots, r_{q-1})$ be a valid rhythm as given by \cref{def:rhythm}. Then the rhythmic tree $\mathcal{I}_r$ generated by $r$ is defined by:
\begin{itemize}
    \item the root $0$ of $\mathcal{I}_r$ has $(r_0 - 1)$ children, which are the notes $1, 2, \ldots,$ and $(r_0 - 1)$. 
    \item for $n > 0$, the node $n$ has $r_{n \mod q}$ children, which are the nodes $(m+1), (m+2), \ldots, $ and $(m + r_{n \mod q})$ where $m$ is the largest child of $(n-1)$.
\end{itemize}
\end{definition}

In \cite{marsault2014rhythmic}, they prove that if $r$ has rational but not integer growth rate, then the paths to all vertices in the rhythmic tree $\mathcal{I}_r$ cannot be verified with a finite automaton. If $r$ is a rhythm with integer growth rate, then paths in the tree $\mathcal{I}_r$ can be described with a finite automaton. 

The reason that rhythmic trees are relevant and the previous statement about their structure is that our new definition is a subset of all rhythmic trees when $k$ is a rational number.

Also, \cite{odlyzko1991functional} describes the Josephus problem, where we are given two numbers, $n, q$. There are $n$ places arranged in a circle, and each $q$th person is excused from the circle until only $1$ is left. The problem is to find out what index the person who is left remains, which is simple for $q = 2$, harder for $q = 3$, and there exists a recursive solution for a general $n, q$. This paper defines the function $c(k).$

For some $k > 1,$ fix the recursive sequence $f_n$ to be $f_0 = 1, f_{n+1} = \ceil{kf_{n}}.$ Then, the function $c(k)$ is defined to be the constant $c$ such that $f_{n} \sim c(k) k^n$ as $n$ trends to infinity. We expand and prove more results about this function in \cref{sec:asy}.

\section{Preliminaries}
\label{sec:defs}

\subsection{Notation}

$\N$ denotes the set of non-negative integers, $\Q$ the set of rational numbers, and $\R$ the set of real numbers. $\floor{\cdot}$ and $\ceil{\cdot}$ denote the floor and ceiling functions. For $x\in\R$, $\{x\}:=x-\floor{x}$ denotes the fractional part of $x$. Note that this means that $\{x\} : \R \longrightarrow [0, 1)$. $\sim$ denotes asymptotic equivalence, where $f(n) \sim g(n)$ means $\lim_{n\to\infty}\frac{f(n)}{g(n)} = 1$.

\subsection{Definitions}

We first formally define a $k$-descending tree:

\begin{definition}
\label{def:k-tree}
For any $k \in (1,\infty) \subset \R$, the \emph{$k$-descending tree}, or simply \emph{$k$-tree} is the rooted tree with nodes in $\N$, where every $n \in \N$ has the parent $\floor{\frac{n}{k}}$, and $0$ is the root node.
\end{definition}

\cref{fig:k_3}, \cref{fig:k_3/2}, \cref{fig:k_phi} were generated with \cref{appendix:tree-code}.

For the integer $k$ case, it generates a complete $k$-ary tree as shown in \cref{fig:k_3}. 

\begin{figure}[H]
    \centering
    \includegraphics[width=\textwidth]{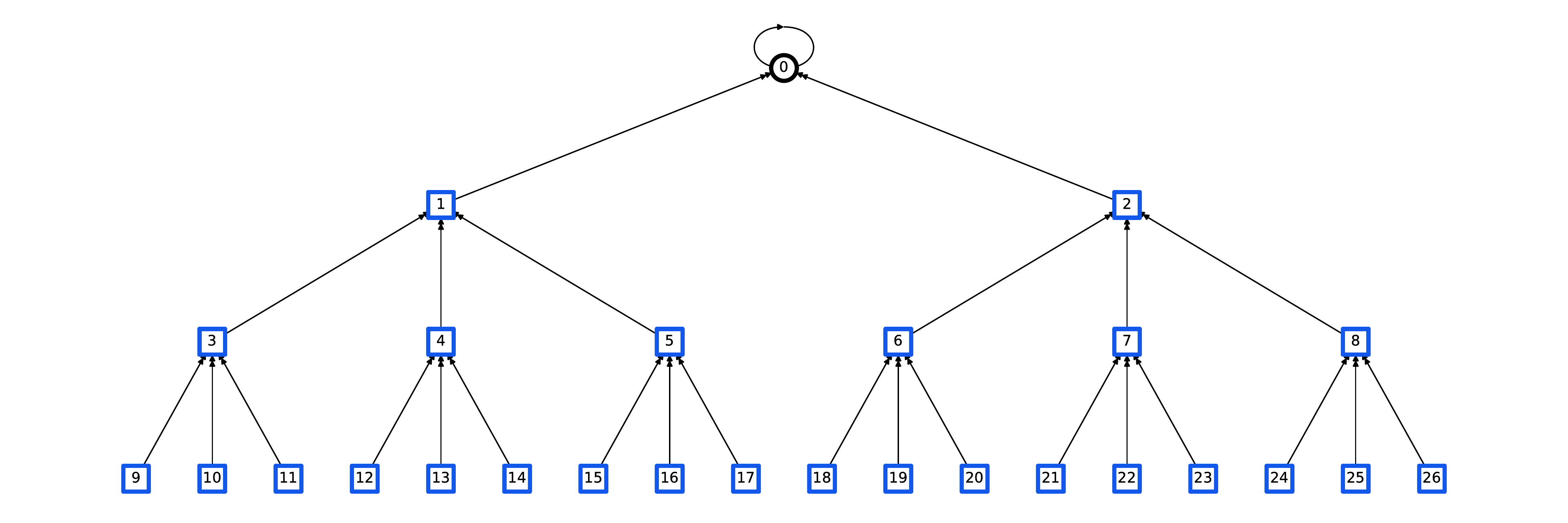}
    \caption[$3$-descending tree]{Used code to generate this tree.}
    \label{fig:k_3}
\end{figure}

Note that in \cref{fig:k_3} that it is the same as a rhythmic tree with rhythm $(3).$ This is true for all integers.

Then, for rational $k$, it generates another rhythmic tree as described in \cite{marsault2014rhythmic}. The childcounts in \cref{fig:k_3/2} are periodic with pattern $2, 1, \ldots$, meaning that it is a rhythmic tree with rhythm $(2, 1).$

\begin{figure}[H]
    \centering
    \includegraphics[width=\textwidth]{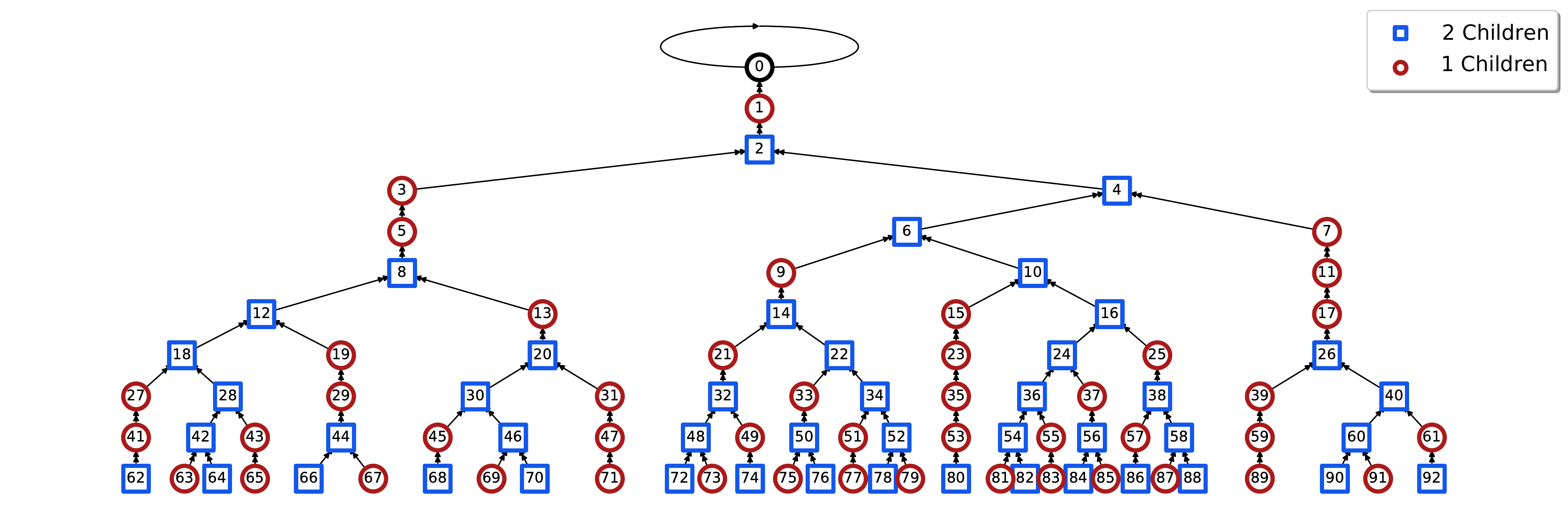}
    \caption[$\dfrac{3}{2}$-descending tree]{Coded it to generate this tree.}
    \label{fig:k_3/2}
\end{figure}

Then, when $k$ is irrational, the behavior becomes slightly more chaotic. This specific example in \cref{fig:k_phi} shows some particularly nice behavior due to our choice of $k = \phi$. 

\begin{figure}[H]
    \centering
    \includegraphics[width=\textwidth]{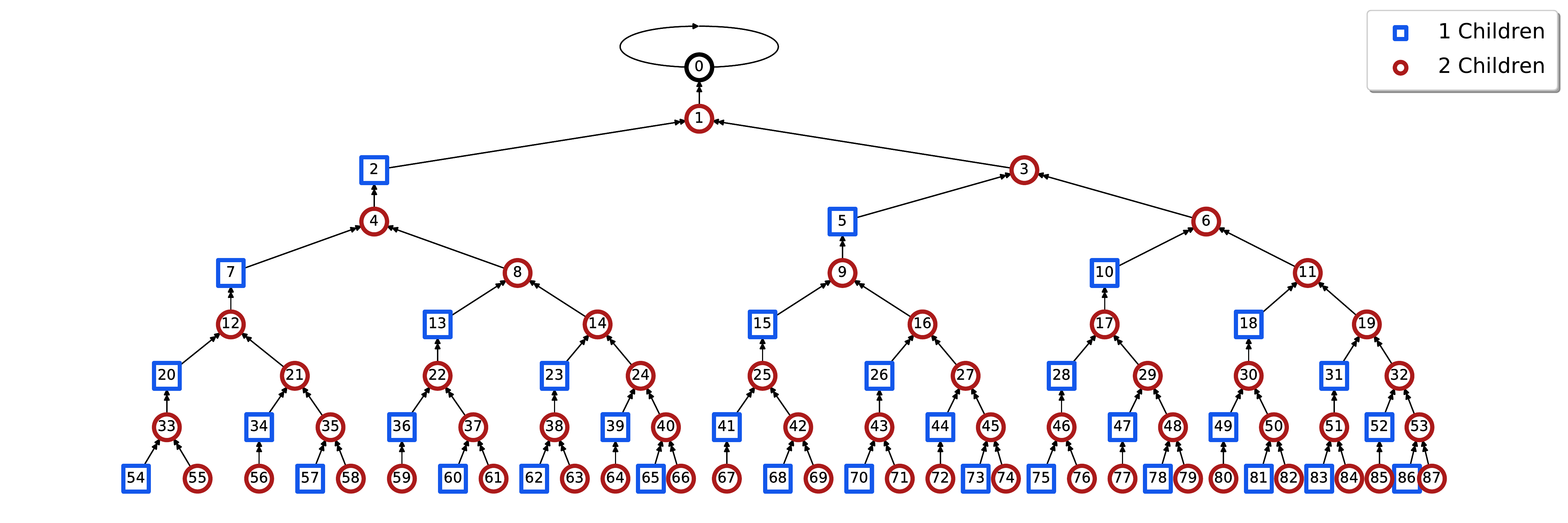}
    \caption[$\frac{1+\sqrt{5}}{2}\approx 1.618$-descending tree]{Used a decimal approximation (up to 1000 digits) to generate this tree.}
    \label{fig:k_phi}
\end{figure}

Note that this does not have a clearly obvious rhythm, with the childcounts not appearing to be periodic.

The remaining functions are defined based on a particular $k$-tree, so are technically also functions of $k$, but for brevity we will often not write the dependence on $k$ explicitly.

\begin{definition}
\label{def:child-count}
For any $n\in\N$, let $\text{children}(n) = \{c \in \N : \floor{\frac{c}{k}} = n\}$ be the set of children of $n$ in the $k$-tree\footnote{Note that technically $0\in\text{children}(0)$. This makes $h(0) = \ceil{k}$, which is consistent with \cref{lem:child-count}, as well as \cref{def:rhythm-tree}.}. Then $h(n) = |\text{children}(n)|$ gives the \emph{child-count} of $n$.
\end{definition}

\begin{remark}
\label{rem:smallest-child}
For any $n\in\N_+$, $\min(\text{children}(n)) = \ceil{n\cdot k}$.
\end{remark}

\begin{proof}
This follows immediately from the fact that $\floor{\frac{\ceil{nk}}{k}} = n$, while $\floor{\frac{\ceil{nk}-1}{k}} < n$.
\end{proof}

We can now formally prove the claim that the average child-count is $k$:

\begin{remark}
\label{rem:average-children}
$\lim_{N\to\infty} \frac{1}{N}\sum_{n=0}^{N-1} h(n) = k$.
\end{remark}

\begin{proof}
Notice that the sum $\sum_{n=0}^{N-1} h(n)$ counts every node from $0$ to the largest child of the node $N-1$. Thus,
\[\sum_{n=0}^{N-1} h(n) = 1 + \max(\text{children}(N-1)) = \min(\text{children}(N)) = \ceil{N\cdot k},\]
from \cref{rem:smallest-child}. Then $\lim_{N\to\infty}\frac{1}{N} \ceil{N\cdot k} = k$.
\end{proof}

The child count $h(n) \in \{\floor{k}, \ceil{k}\}$, and turns out to depend only on the quantity $\{n\cdot k\}$:

\begin{definition}
\label{def:child-count-ind}
For $n\in \N_+$, we call the fractional part $\{n\cdot k\} \in [0,1)$ the \emph{count indicator}. The interval $(0,1-\{k\}] \subset [0,1)$ is called the \emph{floor-range} and its complement $\{0\} \cup (1-\{k\}, 1)$ is called the \emph{ceil-range}.
\end{definition}

These definitions are motivated by the following foundational lemma:

\begin{lemma}
\label{lem:child-count}
For all $n\in\N$,
$$h(n) = \begin{cases}
  \floor{k}  & \emph{\text{if }} \{n \cdot k\} \in (0, 1 - \{k\}]\\
  \ceil{k} &  \emph{\text{otherwise}}.
\end{cases}$$
\end{lemma}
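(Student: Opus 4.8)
The plan is to count, for a fixed $n\in\N_+$, how many integers $c$ satisfy $\lfloor c/k\rfloor = n$, i.e. how many integers lie in the half-open interval $[nk, (n+1)k)$, and then relate that count to $\{nk\}$. By \cref{rem:smallest-child} the smallest such integer is $\lceil nk\rceil$, so $h(n)$ equals the number of integers in $[\lceil nk\rceil, (n+1)k)$, which is $\lceil (n+1)k\rceil - \lceil nk\rceil$ (using that $(n+1)k$ may or may not be an integer, but $\lceil x\rceil$ counts integers in $[\text{something},x)$ correctly after one checks the boundary case). The case $n=0$ can be handled separately and directly, matching the footnote convention $h(0)=\lceil k\rceil$.

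First I would write $nk = \lfloor nk\rfloor + \{nk\}$ and $(n+1)k = nk + \lfloor k\rfloor + \{k\}$, so that $(n+1)k = \big(\lfloor nk\rfloor + \lfloor k\rfloor\big) + \big(\{nk\} + \{k\}\big)$. The integer part of $(n+1)k$ is therefore $\lfloor nk\rfloor + \lfloor k\rfloor$ if $\{nk\} + \{k\} < 1$, and $\lfloor nk\rfloor + \lfloor k\rfloor + 1$ if $\{nk\} + \{k\} \ge 1$; the case $\{nk\}+\{k\}=1$ needs care since then $(n+1)k$ is an integer and the interval $[\,\cdot\,,(n+1)k)$ excludes it. Next I would compute $h(n) = \lceil (n+1)k\rceil - \lceil nk\rceil$ in each regime. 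Since $n\in\N_+$, generically $nk$ is not an integer (it is an integer only if $\{nk\}=0$, which falls in the ceil-range), so $\lceil nk\rceil = \lfloor nk\rfloor + 1$ when $\{nk\}\neq 0$. Carefully combining these gives $h(n) = \lfloor k\rfloor$ exactly when $0 < \{nk\} \le 1-\{k\}$ and $h(n) = \lfloor k\rfloor + 1 = \lceil k\rceil$ otherwise (including the boundary subtleties at $\{nk\}=0$ and $\{nk\}=1-\{k\}$, and noting $\lceil k\rceil = \lfloor k\rfloor$ would only occur if $k\in\Z$, in which case $\{k\}=0$, the floor-range is all of $(0,1)$, and both branches agree).

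The main obstacle is bookkeeping at the interval endpoints: one must be scrupulous about whether the relevant intervals are open or closed at $nk$ and at $(n+1)k$, and about the degenerate cases $\{nk\}=0$ (so $nk\in\Z$), $\{nk\}=1-\{k\}$ (so $(n+1)k\in\Z$), and $k\in\Z$ (so $\{k\}=0$). These are exactly the cases that determine whether the count indicator lands in the floor-range $(0,1-\{k\}]$ or the ceil-range $\{0\}\cup(1-\{k\},1)$, so getting the half-open conventions right is what makes the case split in the statement come out precisely as written. Everything else is a short computation with floors, ceilings, and fractional parts.
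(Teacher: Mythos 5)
Your proposal is correct and follows essentially the same route as the paper: both count the integers in the half-open interval $[nk,(n+1)k)$ and reduce the case split to whether $\{nk\}+\{k\}$ carries past $1$, with the same boundary cases ($\{nk\}=0$, $\{nk\}=1-\{k\}$, $k\in\Z$) handled separately. Your explicit formula $h(n)=\lceil (n+1)k\rceil-\lceil nk\rceil$ is just a slightly more formal packaging of the paper's ``wrap-around'' argument.
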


\begin{proof}
This statement is trivial if $k\in\Z$, so we assume now $k \notin \Z$ and $\floor{k} + 1 = \ceil{k}$.

Notice that $\floor{\frac{c}{k}} = n$ iff $c \in [nk, (n+1)k)$, thus $h(n)$ counts the integer points in this interval, whose width is $k = \floor{k} + \{k\}$. Intuitively, the first part of the interval $[nk, nk + \floor{k})$ of width $\floor{k}$ will always contain $\floor{k}$ integer points. Then when $\{nk\} > 1-\{k\}$, the remaining interval $[nk + \floor{k}, nk + k)$ will ``wrap around'' one additional integer point. On the other hand, when $\{nk\} < 1-\{k\}$, the fractional part strictly increases and does not wrap around an extra integer point.

In the boundary case $\{nk\} = 0$, $nk \in \Z$, and the interval contains $\ceil{k}$ integer points $nk, nk+1, \ldots, nk + \floor{k}$.

In the other boundary case $\{nk\} = 1 - \{k\}$, we have $nk+k \in \Z$, but the interval does not contain this rightmost boundary, and we thus have $\floor{k}$ integer points $nk+k-1, nk+k-2, \ldots, nk+k-\floor{k}$ in the interval.
\end{proof}

Notice that \cref{lem:child-count} implies that for rational $k\in \Q$, the child-count function $h(n)$ is periodic. See for example \cref{fig:k_3/2}. Due to this periodicity we see that it is a rhythmic tree \cref{def:rhythm-tree}.

Finally, we formally define the row-length sequence:

\begin{definition}
\label{def:row-lengths}
$(r_d)_{d=0}^\infty$ is the \emph{row-length sequence}, where $r_d$ gives the number of nodes at depth $d$ in a $k$-tree. More formally, for a node $n\in\N$, the iterated function sequence $g_0=n$ and $g_{i+1} = \floor{\frac{g_i}{k}}$ gives the path to the root. Then $\text{depth}(n) = \max(i:g_i = 0)$ and $r_d = |\{n\in\N : \text{depth}(n) = d\}|$.
\end{definition}

The sequence $r_d$ then intuitively grows at an exponential rate of $k$ as how $r_d \approx r_{d-1} \cdot k.$ However, due to the rounding off done by the floor function, it is not exactly this. So, we can define an asymptotics function on the row lengths.

\begin{definition}
\label{def:rho}
$\rho(k) = \lim_{d\to\infty}\frac{r_d}{k^d}$.
\end{definition}

Note that we still need to show this limit exists. 

\section{Asymptotics of \texorpdfstring{$r_d$}{\textit{rd}}}
\label{sec:asy}

\begin{theorem}
\label{thm:rho_exists}
For any $k \in (1,\infty) \subset \R$, the constant $\rho(k) = \lim_{d\to\infty}\frac{r_d}{k^d}$ exists. Thus $r_d \sim \rho(k) \cdot k^d$.
\end{theorem}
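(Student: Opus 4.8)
The plan is to express $r_d$ in terms of how many nodes at depth $d-1$ have child-count $\floor{k}$ versus $\ceil{k}$, and then control the error term that accumulates. By \cref{lem:child-count}, a node $n$ contributes $\floor{k}$ children if its count indicator $\{nk\}$ lies in the floor-range and $\ceil{k}$ otherwise. So if I let $S_d \subseteq [0,1)$ be the multiset of count indicators $\{nk\}$ over all nodes $n$ at depth $d$, then $r_{d+1} = \floor{k}\,r_d + |\{n : \{nk\}\text{ in ceil-range}\}|$. The key structural fact I would try to establish is that the nodes at a given depth $d$ form a contiguous block of integers $[L_d, R_d)$ — this follows because the parent map $n\mapsto\floor{n/k}$ is monotone, so each depth-$d$ row is an interval, with $L_d = \min(\text{children of } L_{d-1})$ and $R_d = \min(\text{children of } R_{d-1}) = \ceil{R_{d-1}k}$ by \cref{rem:smallest-child}. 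Hence $r_d = R_d - L_d$ where $R_d = \ceil{R_{d-1}k}$ and $L_d = \ceil{L_{d-1}k}$, both of the form studied via the $c(k)$ function from the introduction.

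Given that, $r_d = R_d - L_d$ where $R_0 = 1$, $L_0 = 1$ — wait, one must be careful with the root; more precisely $R_d$ and $L_d$ are each iterates of $x\mapsto\ceil{kx}$ started from appropriate small values (the first child of the previous row's endpoints). The crucial estimate is that for the iteration $x_0 = c$, $x_{i+1} = \ceil{k x_i}$, the ratio $x_d/k^d$ converges. I would prove this by showing $\frac{x_{d+1}}{k^{d+1}} - \frac{x_d}{k^d} = \frac{\ceil{kx_d} - kx_d}{k^{d+1}}$, whose absolute value is at most $k^{-(d+1)}$, so the sequence $x_d/k^d$ is Cauchy and converges to some limit depending on the starting value $c$. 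Call this limit $\gamma(c)$. Then $\rho(k) = \gamma(c_R) - \gamma(c_L)$ for the appropriate starting values, and in particular the limit defining $\rho(k)$ exists. One should double-check $\rho(k) > 0$, i.e. that the two starting iterations genuinely separate: since $r_d$ counts a nonempty set for every $d$ (the tree is infinite and each node has at least one child as $k>1$ forces $h(n)\geq\floor{k}\geq 1$), we get $r_d\geq 1$ for all $d$, and combined with $r_{d+1}\geq\floor{k}r_d$ plus the wrap-around contributions one can show $r_d\to\infty$ and pin down positivity of the limit.

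The step I expect to be the main obstacle is making the telescoping-Cauchy argument fully rigorous at the level of $r_d$ rather than the individual endpoint sequences — specifically, handling the base cases and the root node carefully (the footnote about $0\in\text{children}(0)$ signals that depth $0$ is a little exceptional), and confirming that the two endpoint iterations start from the correct values so that their difference is exactly $r_d$ for all $d\geq 1$. A cleaner alternative that avoids splitting into two iterations: work directly with $r_d$ and show $r_{d+1} = \ceil{kR_d} - \ceil{kL_d}$, then bound $|r_{d+1} - k r_d| = |\ceil{kR_d} - \ceil{kL_d} - k(R_d - L_d)| \leq 2$, which gives $\bigl|\tfrac{r_{d+1}}{k^{d+1}} - \tfrac{r_d}{k^d}\bigr| \leq 2k^{-(d+1)}$, and since $\sum_d 2k^{-(d+1)}$ converges (as $k>1$), the sequence $r_d/k^d$ is Cauchy, hence convergent; this directly yields both the existence of $\rho(k)$ and the asymptotic $r_d\sim\rho(k)k^d$ once we know $\rho(k)\neq 0$.
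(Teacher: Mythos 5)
Your approach is essentially the paper's: the paper also reduces to the single iteration $f_0=1$, $f_{i+1}=\ceil{k f_i}$ (the leftmost node of each row), observes that each row is the interval $[f_{d-1},f_d)$ so that $r_d=f_d-f_{d-1}$, and shows $f_i/k^i$ converges. The only cosmetic difference is that you prove convergence of the normalized iterates by a Cauchy/summable-increments bound, whereas the paper notes the sequence $f_i/k^i$ is \emph{nondecreasing} (since $\ceil{kx}\geq kx$) and bounded above by $\frac{k}{k-1}$, which is slightly more informative. The one structural simplification you miss is that your two endpoint sequences are not independent: since $L_d=\min(\text{children}(L_{d-1}))$ and $L_1=R_0$ (the leftmost node of row $d$ is the smallest child of the leftmost node of row $d-1$), you have $L_d=R_{d-1}$, i.e.\ both are the \emph{same} orbit of $x\mapsto\ceil{kx}$ shifted by one index. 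Hence $\gamma(c_L)=\gamma(c_R)/k$ and $\rho(k)=\frac{k-1}{k}\,\gamma(c_R)$, with no need to track two separate starting values.

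There is a genuine (though easily repaired) gap in your positivity argument, which you need for the conclusion $r_d\sim\rho(k)k^d$. Knowing $r_d\geq 1$ for all $d$ does not prevent $\lim r_d/k^d=0$, and the inequality $r_{d+1}\geq\floor{k}r_d$ is vacuous when $1<k<2$ (where $\floor{k}=1$); your "cleaner alternative" bound $\bigl|\tfrac{r_{d+1}}{k^{d+1}}-\tfrac{r_d}{k^d}\bigr|\leq 2k^{-(d+1)}$ likewise gives a useless lower bound $1-\tfrac{2}{k-1}$ when $k$ is close to $1$. The fix falls out of the identity $L_d=R_{d-1}$ above: the monotonicity $\ceil{kx}\geq kx$ gives $\gamma(c_R)=\lim f_i/k^i\geq f_0/k^0=1$, so
\[
\rho(k)=\gamma(c_R)-\gamma(c_L)=\Bigl(1-\tfrac{1}{k}\Bigr)\gamma(c_R)\;\geq\;\frac{k-1}{k}\;>\;0,
\]
which is exactly the bound the paper records as $\rho(k)\geq\frac{k-1}{k}$. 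With that correction your argument is complete.
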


\begin{proof}
This is essentially a corollary of Proposition 1 from \cite{odlyzko1991functional}. To be self-contained, we produce the proof in its entirety.

Observe that for any $n\in\N_+$, $\min(\text{children}(n)) = \ceil{n\cdot k}$. We will then consider the sequence $f_0 = 1$ and $f_{i+1} = \ceil{f_i\cdot k}$. Notice that, subject to a change in indexing, this gives the leftmost elements in each row. For example, see

\begin{table}[H]
    \centering
    \begin{tabular}{ |c|c| } 
 \hline
 $f_i$ & Value \\ 
 \hline
 $f_0$ & 1 \\ 
 $f_1$ & 2 \\ 
 $f_2$ & 4 \\ 
 $f_3$ & 7 \\ 
 $f_4$ & 12 \\ 
 $f_5$ & 20 \\
 \hline
    \end{tabular}
    \caption{$f_i$ values for the $\phi$-tree, shown in \cref{fig:k_phi}.}
    \label{fig:f_i-example}
\end{table}

We thus have $r_d = f_{d} - f_{d-1}$ for all $d>0$. Proposition 1 \cite{odlyzko1991functional} shows there exists a constant\footnote{\cite{odlyzko1991functional} uses $\alpha$ as the parameter instead of $k$.} $c(k)$ such that $f_i \sim c(k) \cdot k^i$. This will then imply $$r_d \sim c(k)\cdot k^{d} - c(k)\cdot k^{d-1} = \frac{k-1}{k}c(k) \cdot k^d,$$
thus we have $\rho(k) = \frac{k-1}{k}\cdot c(k)$.

To prove Proposition 1 from \cite{odlyzko1991functional}, or that $c(k)$ exists, we consider the sequence $(\frac{f_i}{k^i})_{i=0}^\infty$. First we show the sequence is nondecreasing, since
$$\frac{f_{i+1}}{k^{i+1}} = \frac{\ceil{f_i\cdot k}}{k^{i+1}} \geq \frac{f_i}{k^i}.$$
The sequence is also bounded above. We start with
$$\frac{f_{i+1}}{k^{i+1}} = \frac{\ceil{f_i\cdot k}}{k^{i+1}} < \frac{f_i\cdot k+1}{k^{i+1}} = \frac{f_i}{k^i} + \frac{1}{k^{i+1}},$$
and then with the base case $\frac{f_0}{k^0} = 1$ we conclude
$$\frac{f_n}{k_n} < \sum_{i=0}^n \frac{1}{k^i} <  \sum_{i=0}^\infty \frac{1}{k^i}= \frac{1}{1-1/k} = \frac{k}{k-1}.$$
Since the sequence $(\frac{f_i}{k^i})_{i=0}^\infty$ is nondecreasing and bounded from above, the limit $c(k) = \lim_{i\to\infty}(\frac{f_i}{k^i})$ exists. Moreover, we have the following bounds
$$1 \leq c(k) \leq \frac{k}{k-1}.$$

This proves Proposition 1 from \cite{odlyzko1991functional}, in turn proving the existence of $\rho(k)$ for the asymptotics of $r_d$. 
\end{proof}

\begin{corollary}
\label{cor:rho_bounds}
$\frac{k-1}{k} \leq \rho(k) \leq 1$.
\end{corollary}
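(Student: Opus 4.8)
The plan is to simply combine the two facts already established inside the proof of \cref{thm:rho_exists}: the identity $\rho(k) = \frac{k-1}{k}\,c(k)$, and the two-sided bound $1 \leq c(k) \leq \frac{k}{k-1}$.

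First I would recall that $\frac{k-1}{k} > 0$ for all $k \in (1,\infty)$, so multiplying the inequality $1 \leq c(k) \leq \frac{k}{k-1}$ through by $\frac{k-1}{k}$ preserves the direction of both inequalities. The lower bound then reads $\frac{k-1}{k} = \frac{k-1}{k}\cdot 1 \leq \frac{k-1}{k}\,c(k) = \rho(k)$, and the upper bound reads $\rho(k) = \frac{k-1}{k}\,c(k) \leq \frac{k-1}{k}\cdot\frac{k}{k-1} = 1$. Stringing these together gives $\frac{k-1}{k} \leq \rho(k) \leq 1$, which is exactly the claim.

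There is essentially no obstacle here: the corollary is a one-line algebraic consequence of the theorem, and the only thing worth noting explicitly is the positivity of the factor $\frac{k-1}{k}$ (which holds precisely because $k>1$) so that the multiplication is order-preserving. I would keep the write-up to two or three sentences, citing \cref{thm:rho_exists} for both ingredients.
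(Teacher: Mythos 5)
Your proposal is correct and follows exactly the paper's own argument: multiply the bound $1 \leq c(k) \leq \frac{k}{k-1}$ from the proof of \cref{thm:rho_exists} by the positive factor $\frac{k-1}{k}$ using the relation $\rho(k) = \frac{k-1}{k}\cdot c(k)$. Your write-up is in fact slightly more explicit than the paper's one-line justification, since you note the positivity of $\frac{k-1}{k}$ that makes the multiplication order-preserving.
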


\begin{proof}
The bounds on $c(k)$ as described in the proof of \cref{thm:rho_exists} alongside the relationship $\rho(k) = \frac{k-1}{k}\cdot c(k)$ gives the following bounds on $\rho$.
\end{proof}

So, we can now look at approximations of this data. The code for generating these approximations are attached in \cref{appendix:rho-code}. Some of the raw data is contained within \cref{appendix:raw}.

Within \cite{odlyzko1991functional}, Prop 3. they prove that there are jump discontinuities at the ``Josephus points," or rational numbers of the form $\frac{q}{q-1}$ where $q \in \mathbb{Z}_+.$  This is shown within \cref{fig:joseyjumps}. Since this is not relevant to the focus of the paper the proof of this will not be reproduced in its entirety.

\begin{figure}[H]
    \centering
    \includegraphics[width=\textwidth]{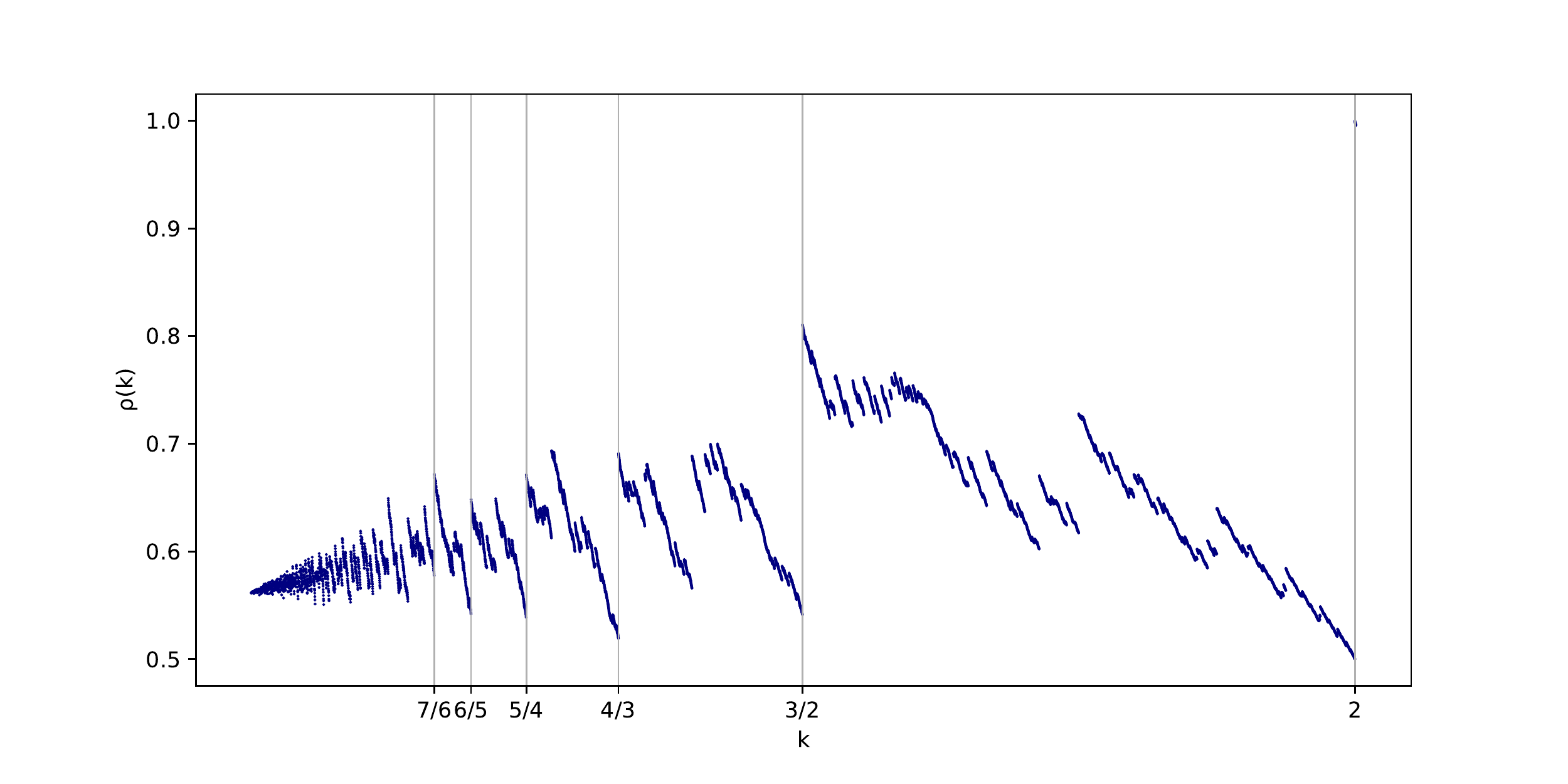}
    \caption[Jump Discontinuities from \cite{odlyzko1991functional}]{In Prop 3. from \cite{odlyzko1991functional}, there are jump discontinuities of an extremely nice form, where $c(a+\epsilon)=ac(a).$}
    \label{fig:joseyjumps}
\end{figure}

Zooming out with \cref{fig:rho_graph}, we can see what appears to be a more global pattern. 

\begin{figure}[H]
    \centering
    \includegraphics[width=\textwidth]{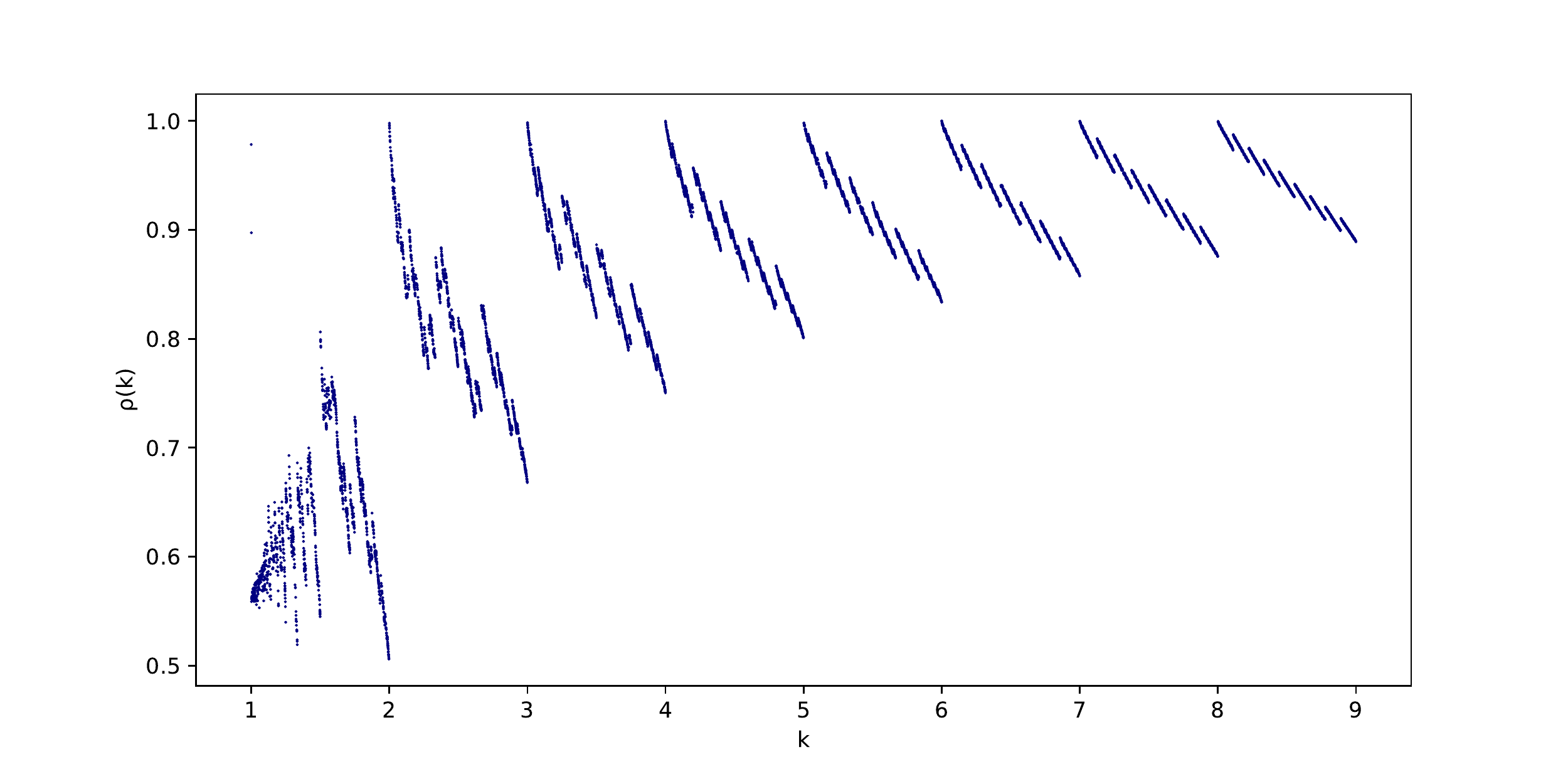}
    \caption[$\rho(k)$ for $1 < k \leq 9$]{Approximated $\rho(k)$ for $10^4$ points.}
    \label{fig:rho_graph}
\end{figure}

One of these patterns appears to be periodic splits within the function. This is illustrated within \cref{fig:splits_rho}. Within $n$ and $(n+1)$ for some integer $n \geq 1$, there are $(n+1)$ periodic `visible splits.' Upon closer inspection, it appears there are also $(n+1)^2$ periodic `visible splits,' with the best ones happening closest to $(n+1).$ This pattern visually holds, and the splits get smaller. Upon numerical testing, it is not clear if they share the same form as those described in \cite{odlyzko1991functional}.

\begin{figure}[H]
    \centering
    \includegraphics[width=\textwidth]{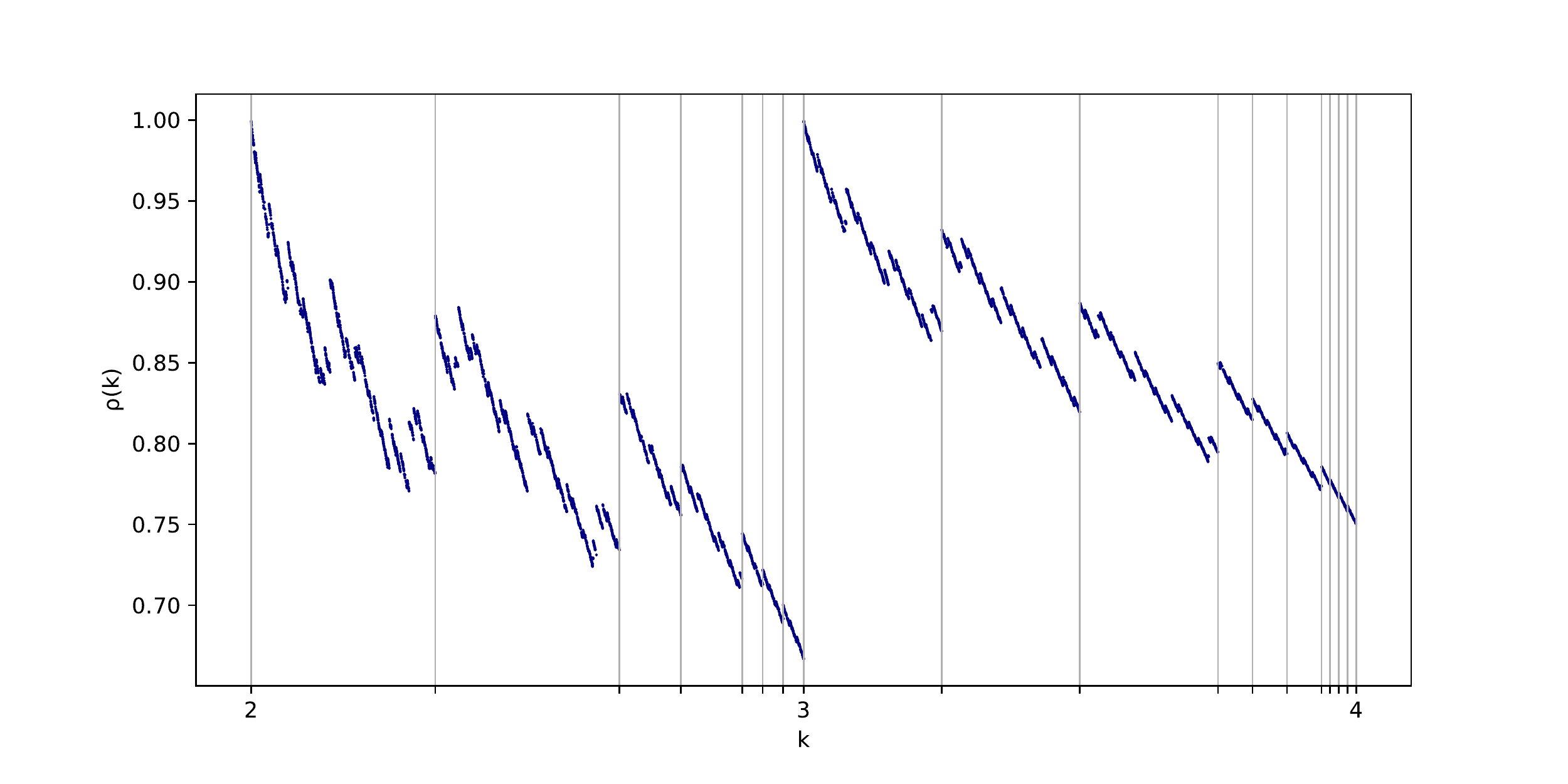}
    \caption[$\rho(k)$ for $1 < k \leq 4$ with apparent splits]{Approximated $\rho(k)$ for $10^4$ points, and also plotted the points of possible visible splits that seem to be self similar between $n$ and $n+1$.}
    \label{fig:splits_rho}
\end{figure}

\section{\texorpdfstring{$k$}{\textit{k}}-Trees with Closed Form \texorpdfstring{$\rho(k)$}{\textit{rho(k)}}}

Let us then define something that makes it more clear how to describe indicators.

\begin{definition}
\label{def:cci-graph}
The child-count indicator graph for some $k > 1$ is the set of functions $\{f_i | i = 1, 2, \ldots, \ceil{k}\}$, where $f_i(x)$ is defined to be the fractional part of the $i$th child of some $n$ where $x = \{nk\},$ so that $f_i : [0, 1) \longrightarrow [0, 1)$. If $\ceil{k} \neq \floor{k}$, then we define $f_{\ceil{k}} : (1-\{k\}, 1] \longrightarrow [0, 1)$ for the $ceil{k}$th child of a node $n$ if it exists. 

Also, the ceil-range and floor-range are used to say $(1-\{k\}, 1]$ and $(0, 1-\{k\}]$ respectively and are helpful to describe the child-count indicator graph in a more general sense. 
\end{definition}

What \cref{def:cci-graph} allows us to do is visualize the behavior of the ``grand-children" of a node $n$. If we know it has fractional part $\{nk\},$ then we can see the fractional parts of its children, showing us its children's children, leading to the terminology of ``grandchildren." 

\begin{lemma}
\label{lem:a-b}
Let $k$ satisfy the equation $k = a + \frac{b}{k}$ for $a,b \in \Z$. Then for any node $n\in \N_+$, let $x = \{n \cdot k\} \in [0,1)$ be the count-indicator for $n$. Let $c_1, \ldots, c_{h(n)}$ be the children of the node $n$. Then the $i$th smallest child $c_i$ has count-indicator
\[\{c_i \cdot k\} = \{(i-\{nk\})\cdot \frac{b}{k}\}.\]
\end{lemma}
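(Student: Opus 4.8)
The plan is to compute $\{c_i\cdot k\}$ directly. By \cref{rem:smallest-child} the smallest child of $n$ is $\ceil{nk}$, and since the children of a node form a block of consecutive integers, the $i$th smallest child is $c_i = \ceil{nk}+(i-1)$. Writing $x=\{nk\}$, I would rewrite $\ceil{nk}$ in terms of $nk$ and $x$, substitute the hypothesis in the form $k^2 = ak+b$, and then discard the integer summands.

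Before doing this it is convenient to split on whether $k$ is an integer. Since $k$ is a root of the monic integer polynomial $X^2-aX-b$, the rational root theorem shows that $k\in\Q$ forces $k\in\Z$; equivalently, a non-integer $k$ with $k = a+\frac{b}{k}$ is irrational. In the integer case $b = k^2-ak = k(k-a)$ is a multiple of $k$, so $\frac{b}{k}=k-a\in\Z$, while $\{nk\}=0$ and $c_ik\in\Z$, so both sides of the asserted identity vanish. In the irrational case $nk\notin\Z$ for all $n\geq 1$, hence $x=\{nk\}\in(0,1)$ and $\ceil{nk} = nk+1-x$, giving $c_i = nk+i-x$.

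The main computation is then short: using $k^2 = ak+b$ together with $k = a+\frac{b}{k}$,
\[c_ik = (nk+i-x)k = nk^2 + (i-x)k = (ank+bn) + (i-x)\Bigl(a+\tfrac{b}{k}\Bigr) = a(nk-x) + bn + ai + (i-x)\tfrac{b}{k}.\]
Now $a(nk-x) = a\floor{nk}$, together with $bn$ and $ai$, are integers, so $c_ik\equiv (i-x)\tfrac{b}{k}\pmod 1$, which is exactly $\{c_ik\} = \{(i-\{nk\})\cdot\tfrac{b}{k}\}$.

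There is no deep obstacle here: the whole argument is one line of algebra once the pieces are in place. The step I would be most careful with is the identity $\ceil{nk} = nk+1-\{nk\}$, which is false precisely when $\{nk\}=0$; that is why it is worth recording up front that non-integer solutions of $k = a+\frac{b}{k}$ are irrational, so this degenerate case occurs only for integer $k$, where the identity collapses to $0=0$.
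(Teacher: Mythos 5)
Your proof is correct and follows essentially the same route as the paper's: write $c_i=\ceil{nk}+i-1$, substitute $k=a+\frac{b}{k}$, and discard the integer terms. The one genuine addition is your explicit handling of the case $\{nk\}=0$ via the rational root theorem (so that $\ceil{nk}=nk+1-\{nk\}$ is justified); the paper uses that identity without comment, so your version is slightly more careful but not a different argument.
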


\begin{proof}
From \cref{rem:smallest-child}, we have $c_1 = \ceil{n\cdot k}$, and more generally $c_i = \ceil{n\cdot k} + i - 1$. We then have count indicator
\begin{align*}
    \{c_i \cdot k\} &= \{(\ceil{n\cdot k} + i - 1)(a + \frac{b}{k})\} \\
    &= \{(\ceil{n\cdot k} + i - 1)\cdot\frac{b}{k}\} \\
    &= \{(n\cdot k + 1 - \{n\cdot k\} + i - 1)\cdot\frac{b}{k})\} \\
    &= \{(i-\{nk\})\cdot \frac{b}{k}\}.
\end{align*}

\end{proof}

Some examples of \cref{def:cci-graph} and \cref{lem:a-b} are shown in \cref{fig:phi-grand}, \cref{fig:3-1-grand}, \cref{fig:37-grand}, \cref{fig:5-3-grand}, \cref{fig:53-grand}, located in the \cref{appendex:grand-figures}.

\begin{theorem}
\label{thm:grand}
(Grandparent theorem) Let $a, b \in Z$ such that $a \geq 1$ and $1-a \leq b \leq a-1$. When $b \geq 0$, then there are always $b$ distinct lines in the ceil-range of the child count indicator graph for $k = \frac{a+\sqrt{a^2+4b}}{2}$. When $b$ is negative, then there are always $|b|$ distinct lines in the floor-range of the graph of $k$.
\end{theorem}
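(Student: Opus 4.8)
The plan is to recast the theorem in the form: in the child-count indicator graph of $k=\frac{a+\sqrt{a^2+4b}}{2}$, exactly $|b|$ of the maximal linear pieces of the functions $f_1,\dots,f_{\ceil{k}}$ lie entirely inside the ceil-range (when $b\geq0$) or inside the floor-range (when $b<0$); equivalently, every node has exactly $|b|$ children whose count-indicator lies in that range, which is the fact one needs for the recurrence $r_d=a\,r_{d-1}+b\,r_{d-2}$. The starting point is that $k$ is the positive root of $k^2=ak+b$, so $k=a+\tfrac bk$ and \cref{lem:a-b} applies with $\beta:=\tfrac bk=k-a$: the $i$-th child of a node with count-indicator $x$ has count-indicator $f_i(x)=\{(i-x)\beta\}$, an affine function of $x$ with slope $-\beta$ where $|\beta|<1$, so on any interval it wraps modulo $1$ at most once and thus splits into at most two maximal linear pieces. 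When $b\geq0$ we have $\beta=\{k\}$ and $\floor{k}=a$; when $b<0$ we have $\floor{k}=a-1$ and $1-\{k\}=-\beta=\tfrac{|b|}{k}$. I will assume $k\notin\Z$, i.e. $a^2+4b$ is not a perfect square; in the excluded case $k$ is an integer, the ceil/floor-range language degenerates, and the statement is treated by convention.

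I would prove the case $b\geq0$ in detail and note that $b<0$ is symmetric (swap ceil-range for floor-range and $\{k\}$ for $1-\{k\}$). Following the ``unwrapped'' values: as $x$ ranges over $[0,1)$, for $1\le i\le a$ the quantity $(i-x)\beta$ sweeps $\bigl((i-1)\{k\},\,i\{k\}\,\bigr]$, and for $i=\ceil{k}=a+1$ the quantity $(a+1-x)\beta$ sweeps $\bigl[\,a\{k\},\,k\{k\}\,\bigr)$ over the ceil-range domain $(1-\{k\},1]$. The two identities in play are $k\{k\}=k\cdot\tfrac bk=b$ and $\floor{k}\cdot\{k\}=(k-\{k\})\{k\}=b-\{k\}^2$, the latter lying in $(b-1,b)$. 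Hence $f_{\ceil{k}}$'s unwrapped interval is $\bigl[\,b-\{k\}^2,\,b\,\bigr)\subset(b-\{k\},\,b)$, so modulo $1$ it is a single clean piece with image $(1-\{k\}^2,\,1)\subseteq(1-\{k\},1)$ --- always inside the ceil-range. For $i\le a$ the interval $\bigl((i-1)\{k\},\,i\{k\}\,\bigr]$ has length exactly $\{k\}$, so it can never be contained in an open gap $(N-\{k\},N)$ and $f_i$ is therefore never \emph{wholly} inside the ceil-range; but if this interval straddles an integer $m$, then the sub-piece with unwrapped values in $\bigl((i-1)\{k\},m\bigr)$ reduces modulo $1$ to $\bigl((i-1)\{k\}-m+1,\ 1\bigr)\subseteq(1-\{k\},1)$ (here one checks $(i-1)\{k\}\geq m-1$ by a length argument and $i\{k\}\geq m$). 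So $f_i$ contributes exactly one ceil-range piece when $\bigl((i-1)\{k\},\,i\{k\}\,\bigr]$ contains an integer, and none otherwise.

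To conclude, observe that the intervals $\bigl((i-1)\{k\},\,i\{k\}\,\bigr]$ for $i=1,\dots,a$ tile $(0,a\{k\}]$ with only the shared endpoints $i\{k\}$ removed, and these are irrational hence non-integral, so the number of $i$ whose interval contains an integer is $\#\bigl(\Z\cap(0,a\{k\}]\bigr)=\floor{a\{k\}}=\floor{b-\{k\}^2}=b-1$. Together with the single piece from $f_{\ceil{k}}$ this gives $b$ linear pieces lying entirely inside the ceil-range; checking that their affine formulas $y=(i-x)\{k\}-(m_i-1)$ have pairwise distinct constant terms shows they lie on $b$ distinct lines. For $b<0$ the same argument with $\floor{k}=a-1$, gap width $1-\{k\}$, and the identities $k(1-\{k\})=|b|$ and $\floor{k}\cdot(1-\{k\})=(k-\{k\})(1-\{k\})=|b|-\{k\}(1-\{k\})\in(|b|-1,|b|)$ yields $|b|$ pieces entirely inside the floor-range.

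The step I expect to be the main obstacle is the piecewise bookkeeping in the middle paragraph: for each $f_i$, correctly pinning down which of its (at most two) linear pieces falls in the ceil-/floor-range, and being careful with the half-open endpoint conventions and with the fact that $f_{\ceil{k}}$ is defined only on the ceil-range, so that the tiling-and-counting step is exact. Conceptually the proof is short, resting on the single identity $\floor{k}\cdot(k-a)=b-\{k\}^2\in(b-1,b)$ together with the telescoping count of integers inside a chain of abutting intervals.
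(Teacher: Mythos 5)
Your computation of the global count --- that exactly $|b|$ of the maximal linear pieces of $f_1,\dots,f_{\ceil{k}}$ lie \emph{wholly} inside the ceil-range (resp.\ floor-range), via the identity $\floor{k}\cdot\{k\}=b-\{k\}^2\in(b-1,b)$ and the telescoping count of integers in the abutting intervals $\bigl((i-1)\{k\},\,i\{k\}\bigr]$ --- appears sound, and it is a genuinely different route from the paper's. The paper instead proves a \emph{pointwise} statement: for every fixed $x$, exactly $b$ of the values $f_i(x)$ land in the ceil-range. It does this by observing that consecutive children's indicators differ by $\{b/k\}$ modulo $1$, which equals the width of the ceil-range, so that as $x$ varies, one indicator exits the range exactly when the next one enters; the count is therefore independent of $x$ and can be evaluated at $x=0$.

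The gap is in your first paragraph, where you assert these two counts are ``equivalent.'' They are not, and the pointwise version is the one \cref{thm:recurrence} actually consumes (``each element in $r_{d-2}$ has $b$ children that have $\ceil{k}$ children''). Concretely, in the case $b\geq 0$, each wholly-contained piece of $f_i$ ($i\leq a$) lives over the sub-domain $x\in\bigl(i-m_i/\{k\},\,1\bigr)$ with $i-m_i/\{k\}>0$, and the piece of $f_{\ceil{k}}$ lives only over the ceil-range; hence for $x$ near $0$ \emph{no} wholly-contained piece sits above $x$, yet the node still has exactly $b$ children in the ceil-range --- these come from pieces that are only \emph{partially} inside the band, namely those $f_i$ with $\{i\{k\}\}>1-\{k\}$ (e.g.\ for $a=3$, $b=2$, at $x$ near $0$ the two ceil-range children come from $f_1$ and $f_3$, neither of which contributes a wholly-contained piece). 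So your tiling argument and the pointwise count both yield the number $b$, but for different index sets of $i$ at a given $x$, and neither implies the other without an additional argument. To close the gap you would need to either prove the pointwise invariance directly (the ``flip'' argument: since $f_{i+1}(x)-f_i(x)\equiv\{k\}\pmod 1$ and the ceil-range has width $\{k\}$, the number of $i$ with $f_i(x)$ in the range is locally constant in $x$, then evaluate at one convenient $x$), or show separately that at every $x$ the partially-contained pieces passing through the band are in bijection with the wholly-contained pieces absent at that $x$.
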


\begin{proof}
Let $a, b \in Z$ such that $a \geq 1$ and $1-a \leq b \leq a-1$. Then, let $k = \frac{a+\sqrt{a^2+4b}}{2}$. Using \cref{lem:a-b}, \begin{align*}
    \{c_{i+1} \cdot k\} - \{c_{i} \cdot k\} & = \{(i + 1 - x) \cdot \frac{b}{k}\} - \{(i - x) \cdot \frac{b}{k}\} \\ & = \{\frac{b}{k}\} \\ & = \{k\}, 1-\{k\} (\text{depending on } b). 
\end{align*}

So, the difference between two consecutive childcount indicators is $\{k\}$ or $1-\{k\}$. The output ceil-range from \cref{lem:child-count} is of size $\{k\},$ and the floor-range is then $1 - \{k\}.$ So, when one childcount indicator falls above or under that line, the next indicator ``flips,'' meaning that the amount of lines in the output ceil-range and floor-range stays constant throughout $x \in [0, 1).$ So, this simplifies the proof in allowing us to choose for $x$ to show that there are specifically $b$ indicators in the specific output range. 

The last step is now to find the number of solutions for $i$ in this expression, $1-\{k\} < \{\{k\}(i-x)\} < 1,$ but now we can choose $x$. If $x = 0,$ then the set of solutions is $\{\lfloor \frac{a}{b}\rfloor, \lfloor \frac{2a}{b}\rfloor, \lfloor \frac{3a}{b}\rfloor, \ldots, \lfloor \frac{(b-1)a}{b}\rfloor, \lfloor a \rfloor\}$, which contains $b$ solutions (in the nonnegative $b$ case). This is the set of solutions to $0 < \{\{k\}(i-x)\} < 1-\{k\}$ for the negative $b$ case. This then proves the theorem.
\end{proof}

\begin{theorem}
\label{thm:recurrence}
Let $a,b \in \Z$ with $a \geq 1$ and $1-a < b < 1+a$. For $k = \frac{a + \sqrt{a^2 + 4b}}{2}$, the row-length sequence for the $k$-tree satisfies the linear recurrence
\[r_d = a\cdot r_{d-1} + b \cdot r_{d-2},\]
with base case $r_0 = 1$, $r_1 = \ceil{k}-1$.
\end{theorem}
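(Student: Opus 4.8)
The plan is to count, for each node $n$ at depth $d-1$, how many grandchildren it contributes at depth $d+1$, and show this aggregates to the recurrence $r_{d+1} = a r_d + b r_{d-1}$ (equivalently $r_d = a r_{d-1} + b r_{d-2}$). The key observation is that $r_{d+1} = \sum_{n : \operatorname{depth}(n) = d} h(n)$, since every node at depth $d+1$ is a child of exactly one node at depth $d$. So I need to understand $\sum_{\operatorname{depth}(n)=d} h(n)$ in terms of the previous two row-lengths. By \cref{lem:child-count}, $h(n)$ is $\floor{k}$ or $\ceil{k}$ according to whether the count-indicator $\{nk\}$ lies in the floor-range or ceil-range, so the sum equals $\floor{k}\cdot r_d + (\text{number of nodes at depth } d \text{ whose count-indicator is in the ceil-range})$. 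Writing $s_d$ for that count of ceil-range nodes at depth $d$, we get $r_{d+1} = \floor{k}\, r_d + s_d$.

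Next I would compute $s_d$ using \cref{lem:a-b} (valid since $k = a + b/k$ when $k = \tfrac{a+\sqrt{a^2+4b}}{2}$). The children of a node $n$ have count-indicators $\{(i - \{nk\})\tfrac{b}{k}\}$ for $i = 1,\dots,h(n)$, and consecutive ones differ by $\pm\{k\}$ (the computation in the proof of \cref{thm:grand}), so as $i$ ranges over the children of $n$ the indicator "flips" in and out of the ceil-range in a regular pattern. The content of \cref{thm:grand} is precisely that each node $n$ has a fixed number — namely $|b|$ — of children whose count-indicator lands in the ceil-range (when $b \ge 0$; when $b<0$ the $|b|$ special children land in the floor-range, contributing $-|b|$ to the correction). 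Summing over the $r_d$ nodes at depth $d$, every node at depth $d$ produces exactly $b$ ceil-range grandchildren-in-waiting among its own children — wait, more carefully: I want $s_d = $ (number of depth-$d$ nodes in the ceil-range), and I should instead relate $s_{d+1}$ to the depth-$d$ nodes. Each depth-$d$ node has $b$ children in the ceil-range (if $b\ge 0$), so $s_{d+1} = b\, r_d$ when $b \ge 0$. Plugging in: $r_{d+2} = \floor{k} r_{d+1} + s_{d+1} = \floor{k} r_{d+1} + b r_d$. Since $\floor{k} = a$ exactly when $1-a < b < 1+a$ (as then $a \le k < a+1$, which needs checking: $k^2 = ak+b$, so $k \ge a \iff k^2 \ge ak \iff b \ge 0$... this requires care for $b<0$), I would verify $\floor{k} = a$ directly from the bounds on $b$, giving $r_{d+2} = a r_{d+1} + b r_d$, i.e. the claimed recurrence after reindexing. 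For $b < 0$, the $|b|$ special children land in the floor-range rather than ceil-range, so $h(n) = \ceil{k}$ by default with $|b|$ exceptions, giving $r_{d+1} = \ceil{k} r_d - |b| r_{d-1} = (a+1) r_d + b r_{d-1}$; I'd reconcile this with $\floor{k} = a$ versus $\ceil{k} = a+1$ by checking which holds when $b<0$ (there $k<a$, so $\floor{k} = a-1$... so actually I must be careful and recompute $\floor k$ in each regime, but the two computations should converge to the same clean recurrence).

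The base cases are immediate: $r_0 = 1$ (just the root) and $r_1 = h(0) = \ceil{k}$ by the footnote to \cref{def:child-count}, but since $0 \in \operatorname{children}(0)$ is the root itself and not at depth $1$, the actual count of depth-$1$ nodes is $\ceil{k} - 1$, matching the statement.

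The main obstacle I anticipate is the bookkeeping around whether $\floor{k} = a$ or $\floor{k} = a-1$ (and correspondingly whether the "default" child-count is $\floor k$ or $\ceil k$) across the sign of $b$, and making sure the $\pm$ in "$b$ ceil-range children" versus "$|b|$ floor-range children" combines with the default count to yield the same formula $a r_{d-1} + b r_{d-2}$ in both cases. I'd also need to confirm that \cref{thm:grand}'s count is uniform over all nodes at a given depth (not just that the total number of ceil-range lines is $b$), but the "flip" argument in its proof shows the count of ceil-range children is independent of $\{nk\}$, hence genuinely the same for every node — which is exactly what makes the sum telescope into a clean linear recurrence.
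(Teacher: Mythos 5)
Your proposal is correct and follows essentially the same route as the paper: use \cref{lem:child-count} to get the ``default'' contribution $\floor{k}\cdot r_{d-1}$ (resp.\ $\ceil{k}\cdot r_{d-1}$ when $b<0$) and \cref{thm:grand} to show each node contributes exactly $b$ (resp.\ $-|b|$) corrections counted against $r_{d-2}$, with the final bookkeeping resolving to $\floor{k}=a$ for $b\ge 0$ and $\ceil{k}=a$ for $b<0$. Your version, with the explicit intermediate quantity $s_d$ and the uniformity remark about \cref{thm:grand}, is in fact more carefully organized than the paper's own two-case sketch.
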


\begin{proof}
Let $a,b \in \Z$ with $a \geq 1$ and $1-a < b < 1+a$. When $b \geq 0,$ we can see that $\floor{k} = a.$ So, $r_d \geq ar_{d-1}$, as from \cref{lem:child-count} we know that each element from the previous row contributes at least $\floor{k}$ children. With \cref{thm:grand}, we can see that each element in $r_{d-2}$ has $b$ children that have $\ceil{k}$ children, which creates the equality $r_d = ar_{d-1} + br_{d-2}.$ Instead, when $b < 0,$ then $\ceil{k} = a,$ so $r_d \leq ar_{d-1}$. What happens then is $br_{d-2}$ actually takes away to compensate for the values with $\floor{k}$ children. So, \[r_d = ar_{d-1} + br_{d-2}.\]
\end{proof}

\begin{corollary}
\label{cor:exact-rho}
Let $a,b \in \Z$ with $a \geq 1$ and $1-a < b < 1+a$. For $k = \frac{a + \sqrt{a^2 + 4b}}{2}$, 
$$\rho(k) = \begin{cases} \dfrac{k}{\sqrt{a^2+4b}} & b > 0 \\[2em] \dfrac{k-1}{\sqrt{a^2+4b}} & b < 0 \\[2em] \dfrac{a-1}{a} & b = 0 \end{cases}$$
\end{corollary}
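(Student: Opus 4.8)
The plan is to invoke \cref{thm:recurrence} and then simply solve the resulting second-order linear recurrence in closed form. Since the row-length sequence satisfies $r_d = a\,r_{d-1} + b\,r_{d-2}$ with $r_0 = 1$ and $r_1 = \ceil{k}-1$, its characteristic polynomial is $x^2 - ax - b$, whose roots are $\lambda_\pm = \frac{a \pm \sqrt{a^2+4b}}{2}$; by the defining relation $k^2 = ak+b$ we have $\lambda_+ = k$. First I would verify that the other root is strictly dominated, i.e. $|\lambda_-| < k$: writing $s := \sqrt{a^2+4b}$, one checks $s>0$ on the stated parameter range (for $b>0$ this is clear, and for $b<0$ the bound $b>1-a$ gives $s^2 > (a-2)^2 \ge 0$), and then $|\lambda_-| = \tfrac{|a-s|}{2} < \tfrac{a+s}{2} = k$ since $a,s>0$. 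Hence $r_d = A\,k^d + B\,\lambda_-^d$ for constants $A,B$ fixed by the initial conditions, and because $|\lambda_-/k|<1$,
\[\rho(k) = \lim_{d\to\infty}\frac{r_d}{k^d} = A.\]

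Next I would extract $A$ from $A+B = r_0 = 1$ and $Ak + B\lambda_- = r_1$, which gives $A = \frac{r_1 - \lambda_-}{k - \lambda_-}$. Two simplifications make this transparent: $k - \lambda_- = s = \sqrt{a^2+4b}$, and (since $\lambda_+ + \lambda_- = a$) $a - \lambda_- = k$. It then remains only to pin down $\ceil{k}$, which is governed by the sign of $b$: a short computation with the definition of $k$ shows $k>a \iff b>0$, $k<a+1 \iff b<a+1$, and $k>a-1 \iff b>1-a$, so on the range $1-a<b<1+a$ we get $a<k<a+1$ when $b>0$ and $a-1<k<a$ when $b<0$. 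Therefore, if $b>0$ then $r_1 = \ceil{k}-1 = a$ and $A = \frac{a-\lambda_-}{s} = \frac{k}{\sqrt{a^2+4b}}$; if $b<0$ then $r_1 = \ceil{k}-1 = a-1$ and $A = \frac{(a-1)-\lambda_-}{s} = \frac{k-1}{\sqrt{a^2+4b}}$.

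Finally the degenerate case $b=0$ needs separate, and easier, treatment, because then $k=a\in\Z$, $\lambda_-=0$, and the recurrence is effectively first order: the $k$-tree is the complete $a$-ary tree, so every node other than the root has exactly $a$ children while the root has $a-1$, whence $r_0=1$ and $r_d = (a-1)a^{d-1}$ for $d\ge 1$, giving $\rho(a) = \frac{a-1}{a}$. Assembling the three cases yields the stated formula. I do not anticipate a serious obstacle here; the only steps that genuinely require care are confirming that $\lambda_-$ is strictly dominated by $k$ (so that the limit really does isolate the coefficient $A$) and correctly locating $k$ between consecutive integers from the sign of $b$ (so that $\ceil{k}$, and hence $r_1$, is pinned down in each regime).
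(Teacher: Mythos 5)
Your proposal is correct and takes essentially the same route as the paper: invoke \cref{thm:recurrence}, solve the recurrence via its characteristic polynomial with roots $k$ and $\lambda_-$, pin down $r_1$ from the sign of $b$, and read off $\rho(k)$ as the coefficient of $k^d$, with $b=0$ handled separately. If anything you are slightly more careful than the paper, which drops the $\lambda_-^{d+1}$ term in the limit without explicitly verifying $|\lambda_-|<k$.
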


\begin{proof}
Because of \cref{thm:recurrence}, we can find a closed formula for $r_d$. 

As the theorem states, let $a, b \in \Z$ such that $a \geq 1, 1-a < b < 1+a$, and let $k_1 = \frac{a + \sqrt{a^4 + 4b}}{2}$ and $k_2 = \frac{a - \sqrt{a^4 + 4b}}{2}$.

Then, for the $k_1$ tree, \[r_d = ar_{d-1} + br_{d-2}.\] 

Like any recurrence problem, let $r_d = c^d$ for some constant $c$. Then, \begin{center}\begin{align*}c^d &= ac^{d-1} + bc^{d-2} \\ c^2 &= ac + b \\ c^2 - ac - b &= 0 \\ c &= k_1, k_2.\end{align*}\end{center} 

So, we can plug in $c$ to $r_d$.\[r_d = Ak_1^d + Bk_2^d,\] for some constants $A$ and $B$.

Now, we have to use the starting conditions $r_0 = 1, r_1 = \floor{k}$ to find $A$ and $B$. This is simply done by plugging in $d = 0, 1$. However, note that when $b \geq 0$, $\floor{k} = a$, and when $b < 0$, then $\floor{k} = a-1.$ 

So, when $b \geq 0$, we can solve two linear systems.
\[
\begin{bmatrix}
1 & 1 & 1\\
k_1 & k_2 & a
\end{bmatrix} \to \begin{bmatrix}
1 & 0 & \dfrac{a + \sqrt{a^2 + 4b}}{2\sqrt{a^2 + 4b}} = \dfrac{k_1}{\sqrt{a^2+4b}}\\[2em]
0 & 1 & \dfrac{-a + \sqrt{a^2 + 4b}}{2\sqrt{a^2 + 4b}}= \dfrac{-k_2}{\sqrt{a^2+4b}}
\end{bmatrix}
\]
So, if $b \geq 0$, \[r_d = \frac{k_1^{d+1} - k_2^{d+1}}{\sqrt{a^2+4b}}.\] 

By solving the same original matrix with $a-1$ swapped with $a$, we can get that when $b < 0$, \[r_d = \frac{(k_1-1)^{d+1} - (k_2-1)^{d+1}}{\sqrt{a^2+4b}}.\]

Now, we can plug these closed formulas into $\rho(k)$. When $b \geq 0,$ \begin{align*}\rho(k) &= \lim_{d \to \infty} \dfrac{r_d}{k_1^d} \\ &= \lim_{d \to \infty} \dfrac{k_1^{d+1} - k_2^{d+1}}{k_1^d\sqrt{a^2+4b}} \\ &= \lim_{d \to \infty} \dfrac{k_1^{d+1}}{k_1^d\sqrt{a^2+4b}} \\ &= \dfrac{k_1}{\sqrt{a^2+4b}}. \end{align*}

Similarly, when $b < 0$, \[\rho(k) = \dfrac{k_1-1}{\sqrt{a^2+4b}}.\]

Note that when $b = 0$, then $k = a \in \Z$, so we can also write $\rho(k) = \dfrac{a-1}{a}$ due to $r_d = k^{d-1}(k-1).$ 
\end{proof}

The values described in \cref{cor:exact-rho} are shown in \cref{tab:k_values}.

\begin{table}[H]
    \centering
    \includegraphics[width = \textwidth]{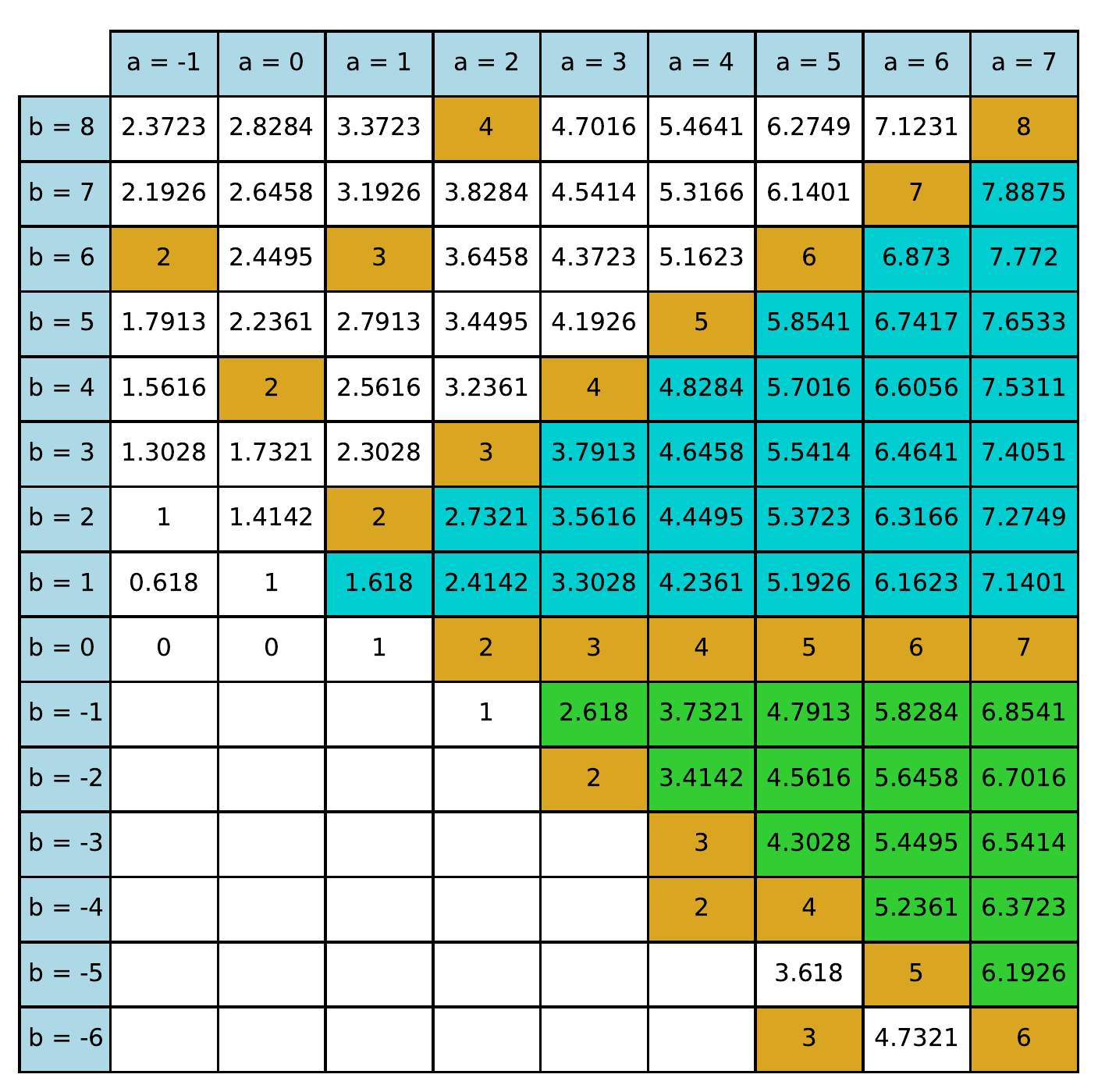}
    \caption{Values of the function $k = \frac{a + \sqrt{a^2 + 4b}}{2}$ for $a,b\in\Z$. Calculated the formula $k = \dfrac{a + \sqrt{a^2 + 4b}}{2}$ with decimal approximations for $-1 \leq a \leq 7,$ and $-6 \leq b \leq 8$.}
    \label{tab:k_values}
\end{table}

So, by using \cref{cor:exact-rho}, we can look at the closed formula for points that we do know on \cref{fig:special_rho}.

\begin{figure}[H]
    \centering
    \includegraphics[width=\textwidth]{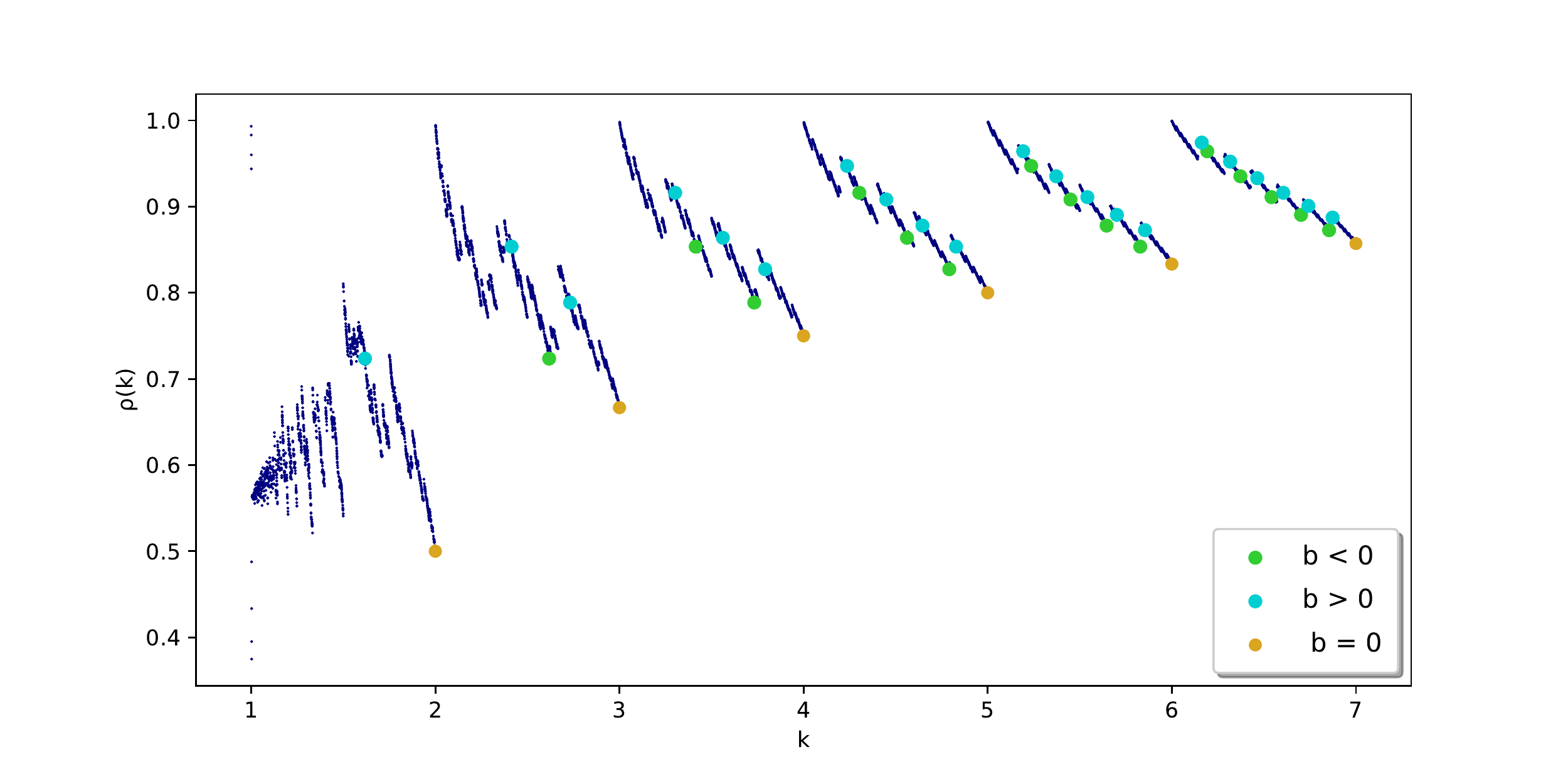}
    \caption[$\rho(k)$ for $1 < k \leq 7$ with closed form]{Approximated $\rho(k)$ for $10^4$ points, and also plotted the points that contain a closed formula.}
    \label{fig:special_rho}
\end{figure}

\section{Conclusion}

So, we have found a closed formula for the irrational case, even when the rational case doesn't have the same type of behavior. This is a rare case where irrationality seems to behave more nicely than their rational counterpart. Also, we have a closed formula for these golden-like trees. This is interesting and motivates the extension from a $k$-ary tree.

%\subsetion{Future Work}

Now, for future work, there seems to be three different modes of progress. First would simply be to change from $\floor{\frac{n}{k}}$ to $\ceil{\frac{n}{k}}$. This would be a $k$-ascending tree.

\begin{definition}
\label{def:k-asc-tree}
For any $k \in (1,\infty) \subset \R$, the \emph{$k$-ascending tree} is the rooted tree with nodes in $\N$, where every $n \in \N$ has the parent $\ceil{\frac{n}{k}}$.
\end{definition}

Note that the root node of \cref{def:k-asc-tree} is not going to be $0$. 

\begin{figure}[H]
    \centering
    \includegraphics[width=\textwidth]{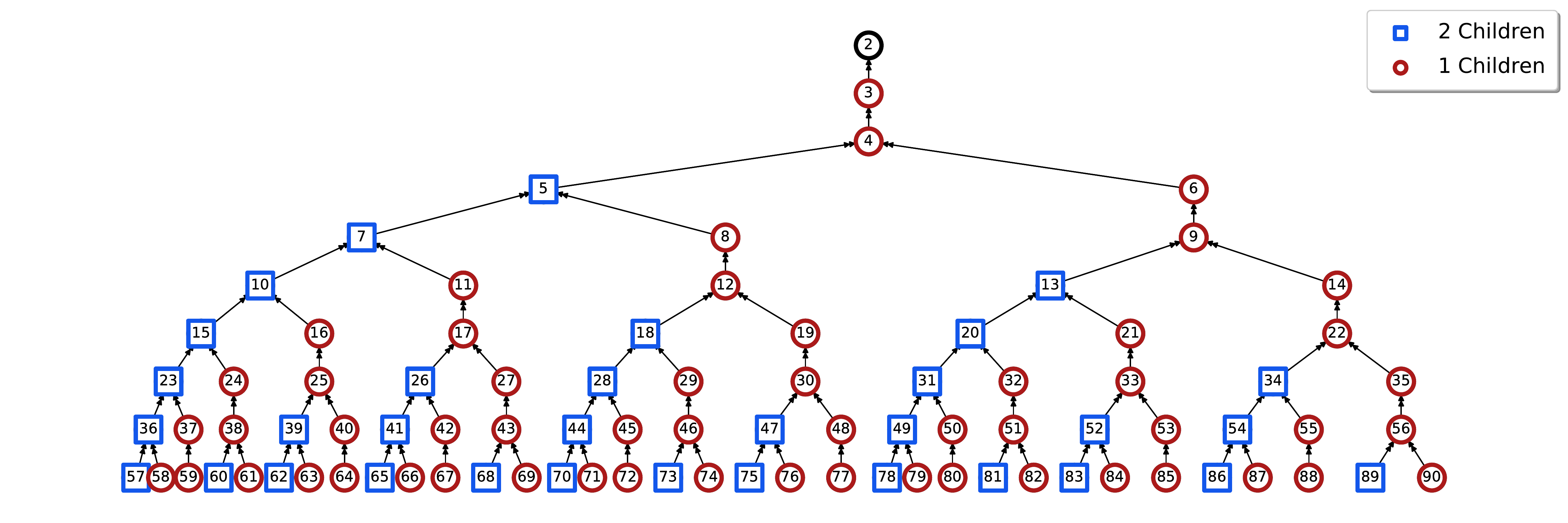}
    \caption[$\phi$-ascending tree]{This generates a $\phi$-ascending tree. Modified the original code to use the ceiling function instead of floor}
    \label{fig:phi-asc-tree}
\end{figure}

\begin{figure}[H]
    \centering
    \includegraphics[width=\textwidth]{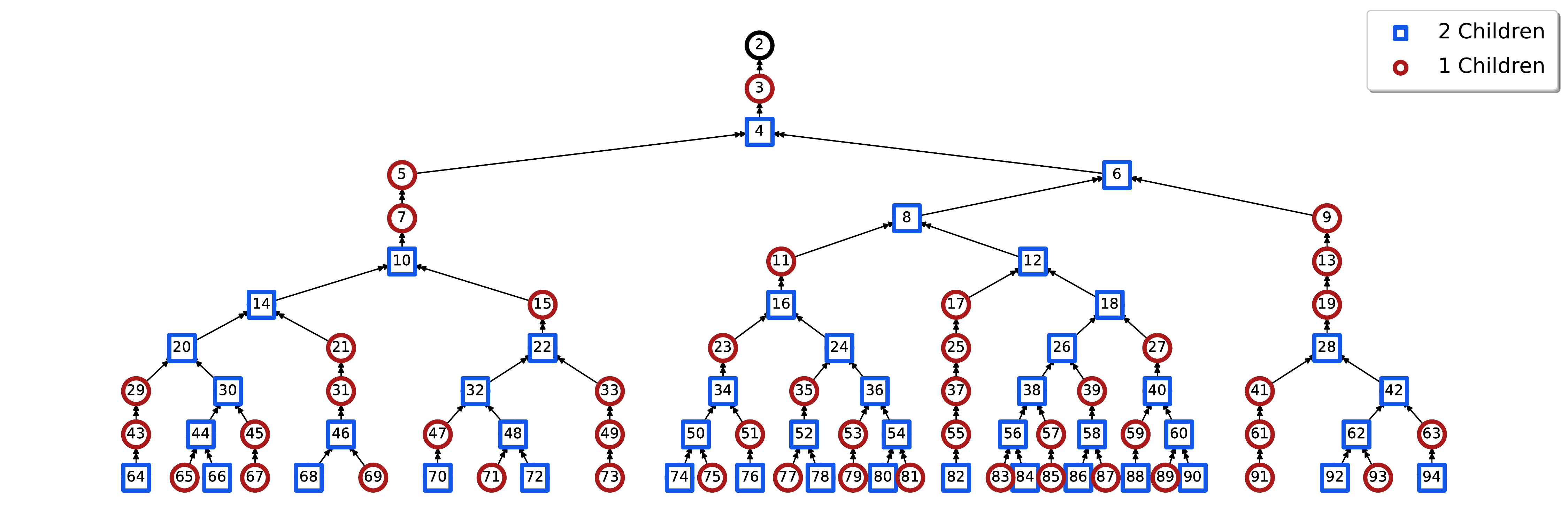}
    \caption[$\frac{3}{2}$-ascending tree]{This generates a $\frac{3}{2}$-ascending tree. Note that this is still a rhythmic tree with a shift in indexing.}
    \label{fig:32-asc-tree}
\end{figure}

From observation, both examples seem to be shifted (as the root is not $0$), and \cref{fig:phi-asc-tree} also seems to be flipped. From this it seems that a lot of the same ideas would carry over very nicely.

Second, it may be interesting to look at more complex $k$'s of the form $k = a + \frac{b}{k} + \frac{c}{k^2}.$ Instead of \cref{thm:recurrence}'s $r_d = a\cdot r_{d-1} + b \cdot r_{d-2}$, we may have the recurrence $r_d = a\cdot r_{d-1} + b\cdot r_{d-2} + c \cdot r_{d-3}.$ It would be nice if this was true, but it doesn't seem like it would work due to \cref{thm:recurrence}'s reliance on the grandparent indicator only going back to $r_{d-2}$.

Also, note that the $\phi$-tree has a regular language that describes its paths. In \cite{marsault2014rhythmic}, they say that all rational numbers $k$ that are not integers do not have this property in the $k$-tree, so it would be interesting to see which other irrational numbers have this property. This isn't true for other ``golden-like" numbers, or numbers of the form $k = a + \frac{b}{k}.$

Furthermore, one of the original reasons that \cite{marsault2014rhythmic} researched rhythms was their connection to fractional bases. When $k$ is a rational number, the language describing paths in the $k$-tree form a bijection with the natural numbers in base $k$. In the irrational $k$ case, the path to a node $n$ in the $k$-tree could be seen as some sort of base-$k$ representation of $n$, but addition is not as clear anymore. 

Within this paper there was the surprising connection to the Josephus problem \cite{odlyzko1991functional}, and the article \cite{grime2022beyond} looks specifically at the constant $\rho(\frac{3}{2})$ in regard to the Collatz Conjecture and mentions the Josephus problem. The connection to the Collatz Conjecture is light, as one can simplify the process into this tree. The Collatz Conjecture states that if you take some $n \in \N$, if it is even divide it by 2, otherwise take $3n+1$ and continue this process, you will always reach 1. The article talks about the simplification of this process to just $\floor{\frac{3n}{2}}$ to remove the complexity and arrives at the $c(\frac{3}{2})$ constant to describe the tree that this process generates, which is the same as our $k$-descending tree. 

These connections to the Josephus Problem and others can be further fleshed out if one found a more direction connection by figuring out how to represent the Josephus problem with trees. One idea could be that the indices of the vertices represent the players in the Josephus problem. 

This research was presented at the Sofia, Bulgaria WFNMC conference in the summer of 2022. The authors would like to thank the administration there.

\bibliographystyle{plainurl}
\bibliography{refs}

\hspace{3cm}~ \break

\href{https://www.linkedin.com/in/agniv-sarkar-88762a242/}{Agniv Sarkar} \and \href{https://eric-severson.netlify.app/}{Eric Severson}
\addresseshere

\clearpage
\pagenumbering{roman}
\appendix

\section{Tree Generation Code}\label{appendix:tree-code}

This is the code used to generate the $k$-tree figures. 

\UseRawInputEncoding
\lstinputlisting[language=Python, caption=Tree Generation Code]{code/tree.py}

\section{\texorpdfstring{$\rho$}{\textit{rho}} Code}\label{appendix:rho-code}

This is the code used to generate $\rho(k)$ values. 

\UseRawInputEncoding
\lstinputlisting[language=Python, caption=$\rho(k)$ Approx. Code]{code/rho.py}

\section{\texorpdfstring{$\rho$}{\textit{rho}} Raw Data}\label{appendix:raw}

This is some recorded raw data generated by the code. Note that there appears to be some rounding off that happens as the data is written to the csv file. The data is actually too large to be typically viewed with tools like csvsimple. The link is at \href{https://github.com/agniv-the-marker/rho-stuff}{https://github.com/agniv-the-marker/rho-stuff}.

\section{Grandparent Indicator Figures}\label{appendex:grand-figures}

\begin{figure}[H]
    \centering
    \includegraphics[width=\textwidth]{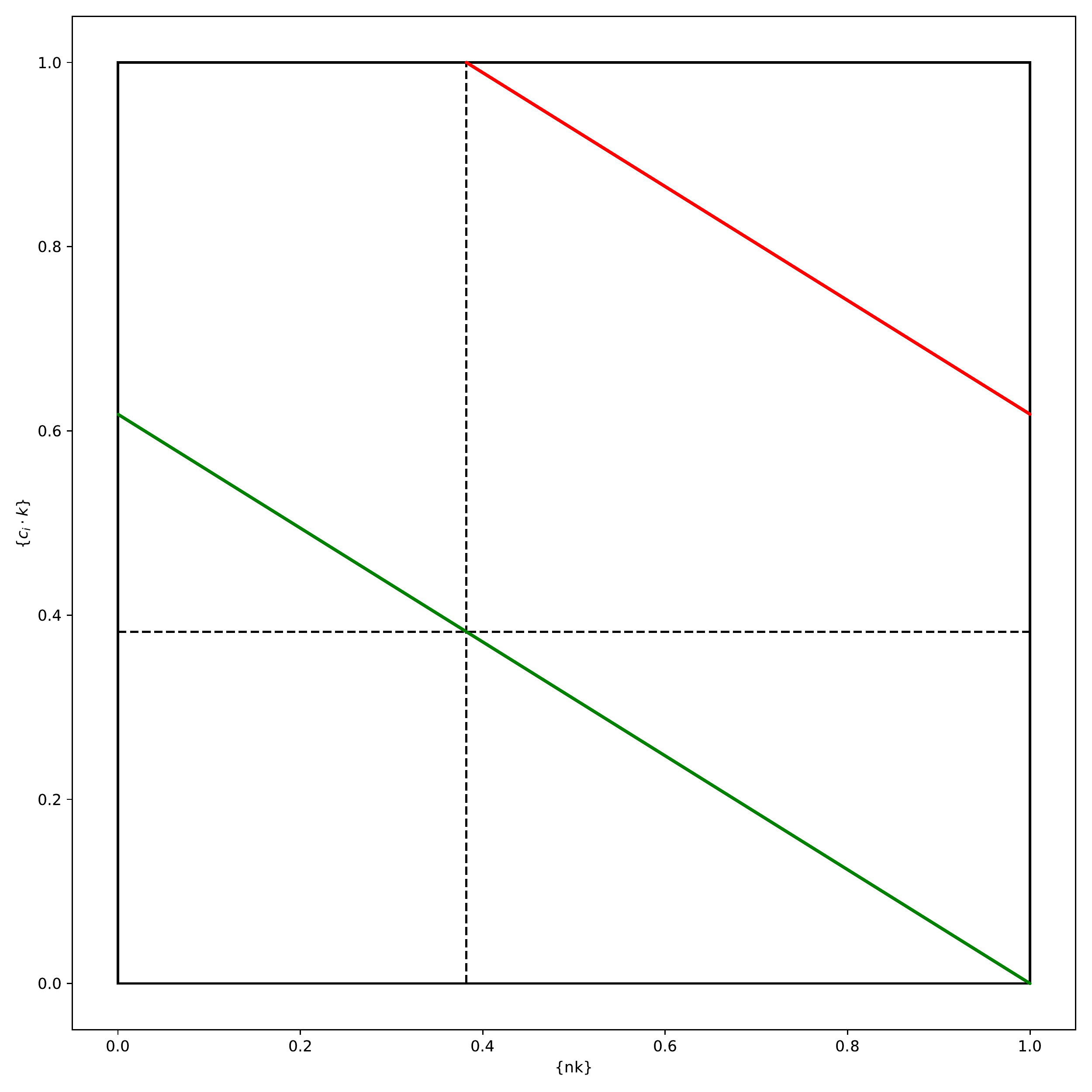}
    \caption{$\phi$ Grandparent Indicators}
    \label{fig:phi-grand}
\end{figure}
\begin{figure}[H]
    \centering
    \includegraphics[width=\textwidth]{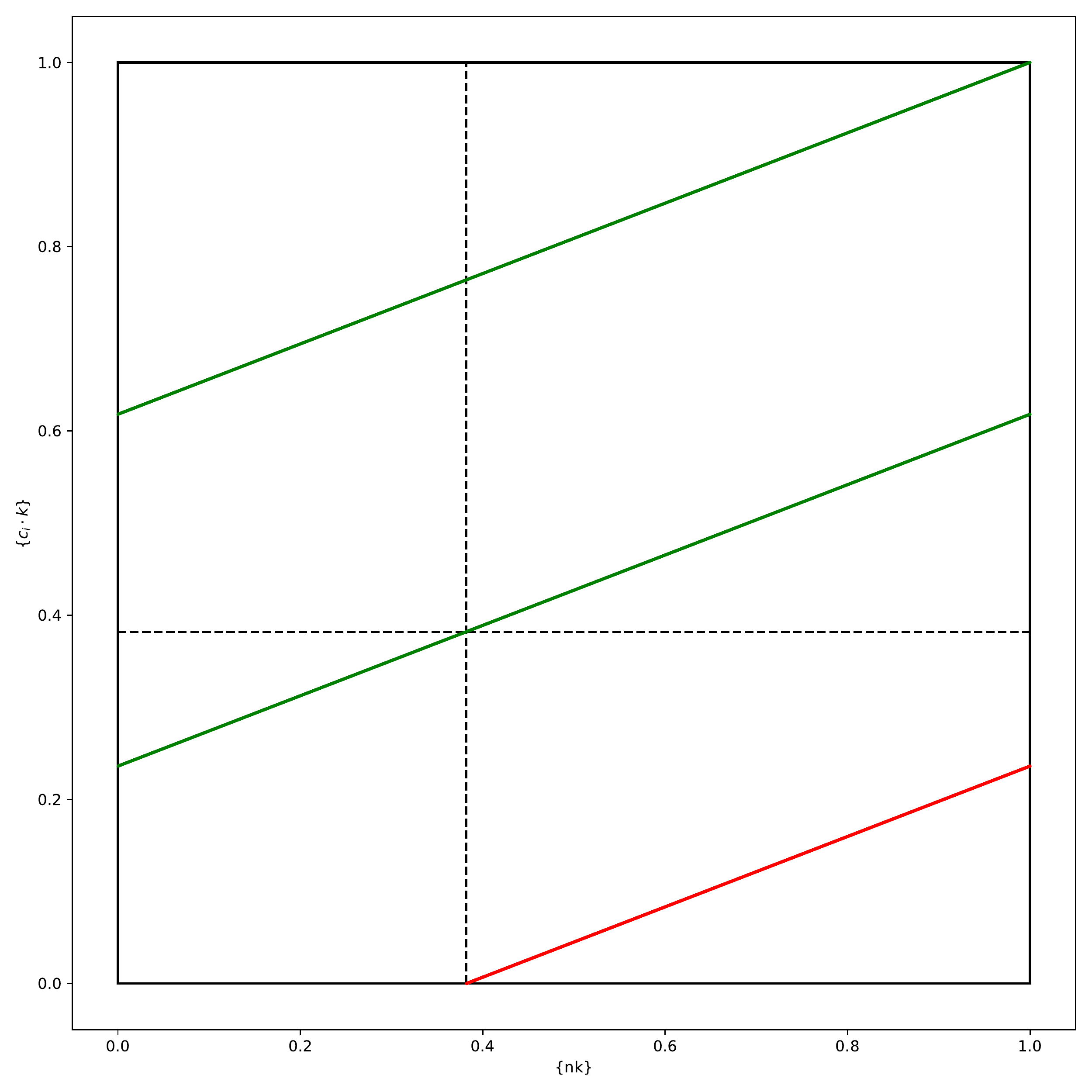}
    \caption{$a = 3, b = -1$ Grandparent Indicators}
    \label{fig:3-1-grand}
\end{figure}
\begin{figure}[H]
    \centering
    \includegraphics[width=\textwidth]{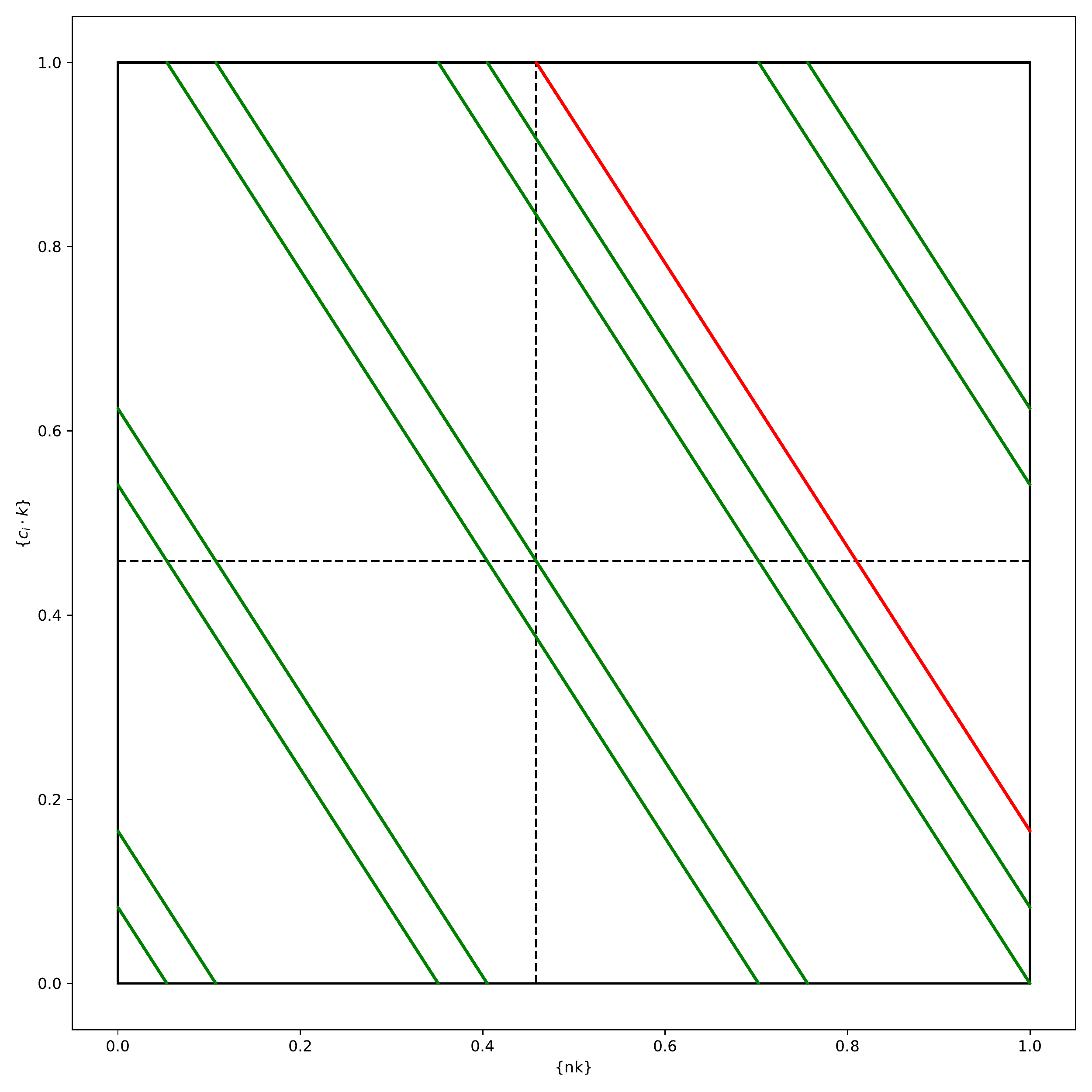}
    \caption{$a = 3, b = 7$ Grandparent Indicators}
    \label{fig:37-grand}
\end{figure}
\begin{figure}[H]
    \centering
    \includegraphics[width=\textwidth]{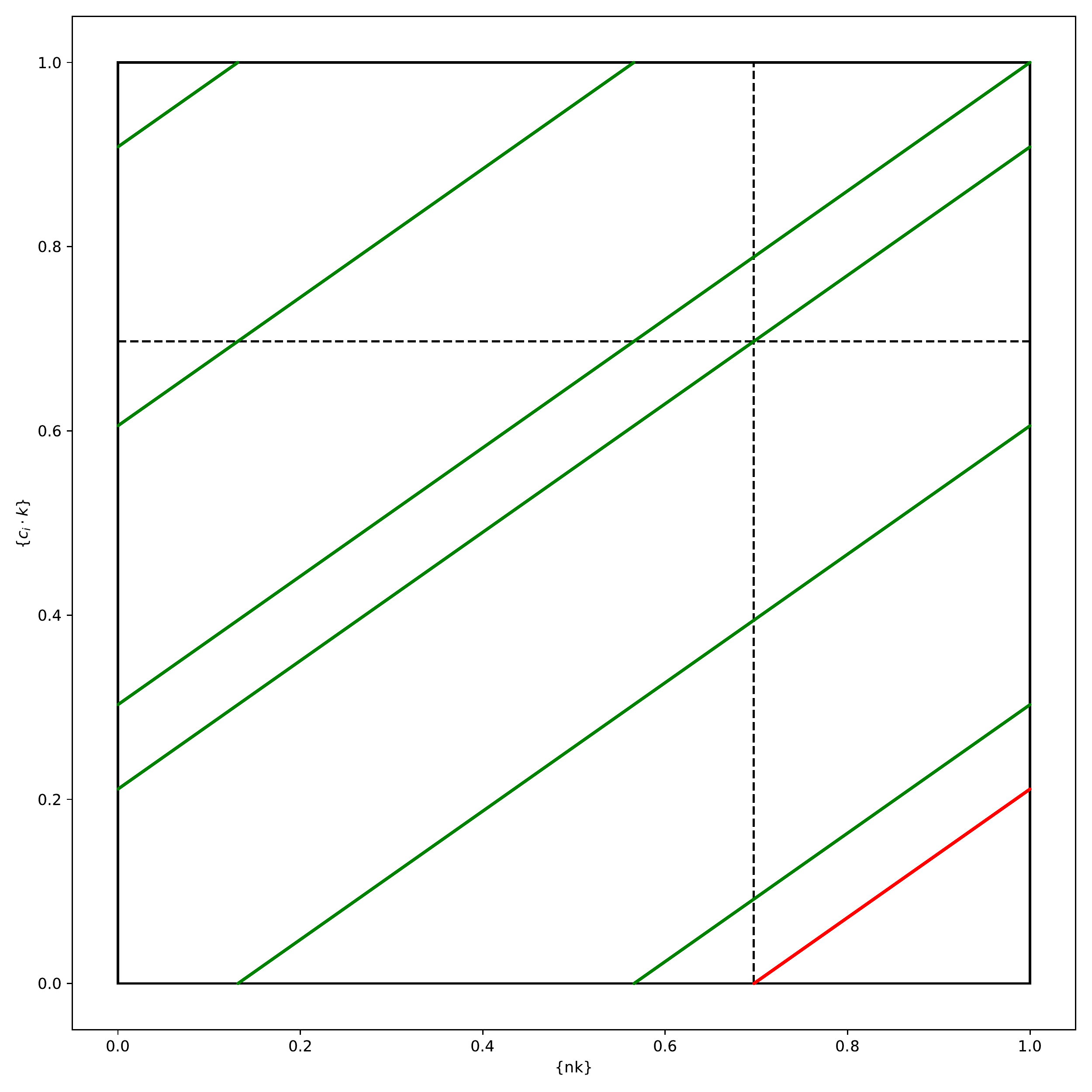}
    \caption{$a = 5, b = -3$ Grandparent Indicators}
    \label{fig:5-3-grand}
\end{figure}
\begin{figure}[H]
    \centering
    \includegraphics[width=\textwidth]{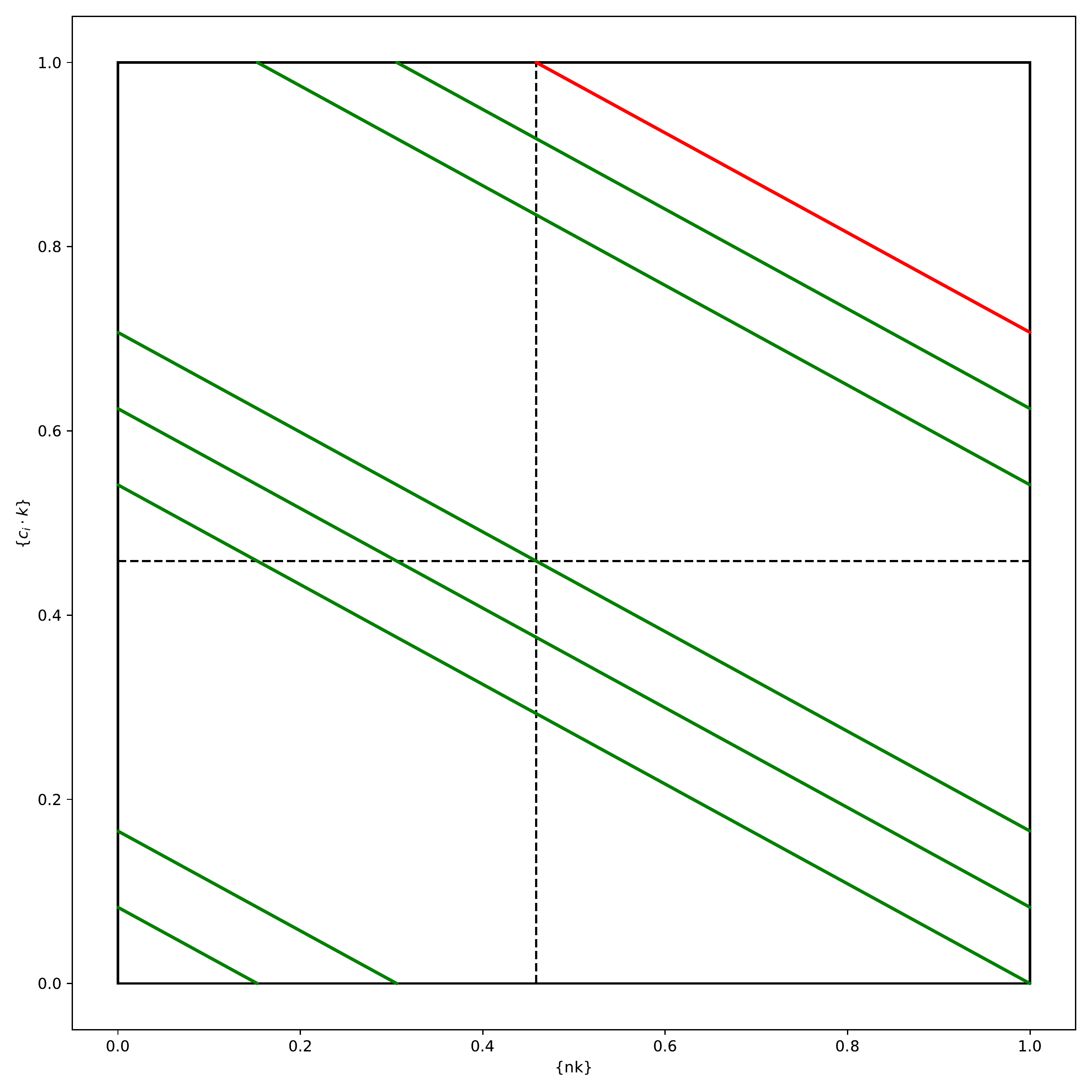}
    \caption{$a = 5, b = 3$ Grandparent Indicators}
    \label{fig:53-grand}
\end{figure}

% \csvautotabular{rho.csv} % Too big!!!

\end{document}